\newcounter{commentcounter}
\newcommand{\showcomments}{yes}
\newsavebox{\commentbox}
\newenvironment{claima}[1]{\par\noindent \underline{Claim A:}\space#1}{}
\newenvironment{claimb}[1]{\par\noindent \underline{Claim B:}\space#1}{}
\newenvironment{com}%
{\ifthenelse{\equal{\showcomments}{yes}}%
{\footnotemark
        \begin{lrbox}{\commentbox}
        \begin{minipage}[t]{1.25in}\raggedright\sffamily\tiny
        \footnotemark[\arabic{footnote}]}
{\begin{lrbox}{\commentbox}}}
{\ifthenelse{\equal{\showcomments}{yes}}
{\end{minipage}\end{lrbox}\marginpar{\usebox{\commentbox}}}
{\end{lrbox}}}
\title{Relative Torsion and Bordism Classes of Positive Scalar Metrics on Manifolds with Boundary}
\author{Simone Cecchini, Mehran Seyedhosseini, Vito Felice Zenobi}
\newtheorem{theorem}{Theorem}[section]
\newtheorem{proposition}[theorem]{Proposition}
\newtheorem{lemma}[theorem]{Lemma}
\theoremstyle{definition}
\newtheorem{definition}[theorem]{Definition}
\theoremstyle{definition}
\newtheorem{remark}[theorem]{Remark}
\newtheorem{corollary}[theorem]{Corollary}
\DeclareMathOperator{\ind}{ind}
\newcommand{\reals}{\mathbb{R}}
\date{}
\begin{document}
\maketitle
\begin{abstract}
In this paper, we define a relative $L^2$-$\rho$-invariant for Dirac operators on odd-dimensional spin manifolds with boundary and show that they are invariants of the bordism classes of positive scalar curvature metrics which are collared near the boundary. As an application, we show that if a $4k+3$-dimensional spin manifold with boundary admits such a metric and if, roughly speaking, there exists a torsion element in the difference of the fundamental groups of the manifold and its boundary, then there are infinitely many bordism classes of such psc metrics on the given manifold. This result in turn implies that the moduli-space of psc metrics on such manifolds has infinitely many path components. We also indicate how to define delocalised $\eta$-invariants for odd-dimensional spin manifolds with boundary, which could then be used to obtain similar results for $4k+1$-dimensional manifolds.
\end{abstract}
\section{Introduction}
In recent years, there have been many contributions to the study of the existence and classification problem for positive scalar curvature metrics on manifolds. The index theory of (twisted) Dirac operators on spin manifolds has been extensively and successfully used in this study. Some results on the topology of the space of psc metrics can be found in \cite{LM89}, \cite{BG95}, \cite{PS07II}, \cite{XYZ17}, \cite{BERW17} and \cite{ERW19}. Apart from \cite{BERW17} and \cite{ERW19}, most of these results apply to closed manifolds and statements on the existence and classification problem for psc metrics on manifolds with boundary are more scarce in the literature. The recent works \cite{BERW17} and \cite{ERW19} give very precise information about the (nontriviality) of the homotopy groups of the space of psc metrics which are collared near the boundary and restrict to a fixed metric near the boundary. However, the invariants utilised by them cannot be used to gain information about the space of bordism classes of such metrics. The following paper is a contribution to the study of psc metrics on manifolds with boundary. Here, we develop invariants which allow us to study the bordism classes of psc metrics on a manifold with boundary which are collared near the boundary. Our observations lead to results concerning the path components of the (moduli) space of psc metrics which do not necessarily restrict to a fixed metric at the boundary.

In the following, we will not discuss the question whether a given spin manifold with boundary admits a metric with positive scalar curvature (with certain boundary conditions) and we will concern ourselves with the secondary question of classification of such metrics. More precisely, we will show that under certain conditions on the fundamental groups of the manifold and its boundary and the dimension of the manifold, the space of psc metrics which are collared near the boundary has infinitely many path components. The same statement is true for the moduli space of such metrics (modulo the action of the diffeomorphism group). Furthermore, we show that on manifolds satisfying our conditions, there exist infinitely many psc metrics (collared near the boundary) which are not even bordant (in an equivariant sense) to each other. 

In \cite{BG95}, Botvinnik and Gilkey showed that if a closed odd dimensional spin manifold with finite fundamental group admits a positive scalar curvature metric, then it admits infinitely many such metrics which are not (equivariantly) bordant to each other. This implies, in particular, that these metrics do not lie in the same path component of the space of psc metrics. The invariants they used to differentiate psc metrics was the APS-$\rho$-invariants introduced in \cite{APSII75}. These are the $\eta$-invariants of the spin Dirac operator twisted with virtual representations of virtual dimension $0$. In \cite{PS07II}, Piazza and Schick used the approach of Botvinnik and Gilkey and obtained similar results for manifold with arbitrary fundamental groups containing torsion. The invariant used by them was the $L^2$-$\rho$-invariant of Cheeger and Gromov introduced in \cite{CG85}. This is the difference of the the $L^2$-$\eta$-invariant of the Dirac operator associated the universal cover and the $\eta$-invariant of the manifold. It can also be seen as the generalisation of the APS-$\rho$-invariant associated to a virtual representation arising from the regular and trivial representations of the fundamental group. To obtain results in dimesnions $4k+1$, Piazza and Schick also made use of the delocalised $\eta$-inavriant of John Lott. 

The main novelty of this paper is the definition of the relative $L^2$-$\rho$-invariant for odd-dimensional compact spin manifolds with boundary. In most of the paper, we are concerned with establishing the fundamental properties of this invariant. The geometric applications stated above are then obtained by following the same path as the one taken in \cite{PS07II}. In a remark, we also point out how our technique can be used to define delocalised $\eta$-invariants associated to metrics on manifolds with boundaries. Using this, there does not seem to be any problem in obtaining the analogues of the more quantitative results in \cite{PS07II} for manifolds with boundary.

We quickly describe how our relative $L^2$-$\rho$-invariant is defined. Let $M$ be a compact spin manifold with boundary $N$. Suppose that the Riemannian metric is collared near the boundary. By the van Kampen theorem the double of $M$ along $N$ has fundamental group $\pi_1(M)*_{\pi_1(N)}\pi_1(M)$. Denote by $\mathrm{Im}(\pi_1(N))^{\pi_1(M)}$ the normal closure of the image of $\pi_1(N)$ in $\pi_1(M)$ under the map of fundamental groups induced by the inclusion $N \rightarrow M$. One can then twist the spin Dirac operator of the double with a bundle of Hilbert modules over the group von Neumann algebra of $\pi_1(M)^{}/\mathrm{Im}(\pi_1(N))^{\pi_1(M)}$. This bundle is defined using a suitable representation of $\pi_1(M)*_{\pi_1(N)}\pi_1(M)$ on the group von Neumann algebra $\mathcal{N}(\pi_1(M)^{}/\mathrm{Im}(\pi_1(N))^{\pi_1(M)})$. One can define an $\eta$-invariant with values in the abelianisation of $\mathcal{N}(\pi_1(M)^{}/\mathrm{Im}(\pi_1(N))^{\pi_1(M)})$ and our $L^2$ $\rho$-invariant is then obtained from the latter $\eta$-invariant using the canonical trace of $\mathcal{N}(\pi_1(M)^{}/\mathrm{Im}(\pi_1(N))^{\pi_1(M)})$. Using a pair of finite-dimensional representations of  $\pi_1(M)$ agreeing on  $\pi_1(N)$, we can also define a relative version of the APS $\rho$-invariants and in this case our invariants seem to be stongly related to the invariants defined by Bunke in \cite{B15}, which has inspired our construction.

The fact that the relative $L^2$-$\rho$-invariant is an invariant of the bordism class of psc metrics is then established by a modification of a standard argument using the (higher) APS index formula. We later prove the important fact that the relative $L^2$-$\rho$-invariant can also be obtained as the $L^2$-$\eta$-invariant of a suitable cover of the double of the manifold with boundary. We make use of this characterisation of our invariant as an $L^2$-$\eta$-invariant to establish further fundamental properties of the relative $\rho$-invariant.

The paper is organised as follows. In the second section, we define in detail the relative versions of the groups appearing in the Stolz positive scalar curvature sequence, which again fit in a long exact sequence. In the third section, we define the relative $L^2$-$\rho$-invariant associated to Riemannian metrics which are collared near the boundary and prove that it is an invariant of the bordism class of a psc metric. This is followed by the characterisation of the relative $L^2$-$\rho$-invariant as an $L^2$-$\eta$-inavriant. In the fourth section we use the latter characterisation to establish the fundamental properties of our invariant. In the last section we use the developed theory and the techniques of \cite{PS07II} to obtain some geometric applications
\section*{Acknowledgements}
Part of the research for this paper was conducted during the stay of the first two authors in the \textit{Mathematisches Forschungsinstitut Oberwolfach}. The stay was funded through the Research in Pairs program.
\section{Relative Stolz psc sequence}
In this section we define the the relative version of the positive scalar curvature exact sequence of Stolz. For the definitions in the absolute case we refer the reader to \cite{PS14}*{Definition 1.26}\footnote{\cite{PS14}*{Definition 1.26.2} contains a typo. The group defined there is $\mathrm{R}^{\mathrm{Spin}}_{n+1}(X)$.} and \cite{RS01}*{Definition 5.4}. Let $\alpha\colon Y\to X$ be a continuous map of the topological spaces $Y$ and $X$. First we recall the definition of the relative spin bordism group.
\begin{definition}
$\Omega_n^{\mathrm{Spin}}(\alpha)$ will denote the usual relative spin bordism group. Cycles are given by tuples $(M, N,  F\colon M\to X, f\colon N \to Y)$ such that $M$ is an $n$-dimensional compact spin manifold with boundary $N$ and $F|_{N}=\alpha\circ f$.
This cycle is null-bordant if there exists $(V,E,e)$ where
	\begin{itemize}
		\item $V$ is an $n+1$-dimensional manifold with corner with $\partial V = M\cup \partial_iV$ and corner of $V$ is $\partial_cV = M\cap \partial_iV=N$.
		\item $E$ is a continuous map $V \rightarrow X$ with $E_{|M} = F$.
		\item $e$ is a continuous map $\partial_iV \rightarrow Y$ with $E_{|\partial_iV} = \alpha \circ e$ and $e_{|N} = f$.
	\end{itemize}
  The addition of two cycles is defined to be their disjoint union. The inverse of a cycle is obtained by reversing the spin structure. Two cycles are equivalent if their difference is null-bordant. The group $\Omega_n^{\mathrm{Spin}}(\alpha)$ is defined as the set of equivalence classes of the above cylces and the above addition induces the group operation. In the following, manifolds such as $V$ providing a cobordism between other manifolds will be called bordisms and $\partial_iV$ will be referred to as the internal boundary of the bordism $V$.
\end{definition}
\begin{definition}
The cycles for $\mathrm{Pos}^{\mathrm{Spin}}_{n}(\alpha)$ have the form
$$(M, N, F\colon M\to X, f\colon N\to Y, g),$$  where $(M, N, F, f)$ defines a cycle for $\Omega_n^{\mathrm{Spin}}(\alpha)$ and $g$ is a metric with positive scalar curvature on $M$ which is collared near $N$. 
A cycle is null-bordant if there exists $(V,E,e,G)$ where
	\begin{itemize}
		\item $(V,E,e)$ provides null-bordism for $(M, N, F,f)$ seen as a cycle in $\Omega_n^{\mathrm{Spin}}(\alpha)$.
		\item $G$ is a positive scalar curvature metric on $V$ which is collared near $\partial V$ and restricts to $g$ on $M$. We assume that $G$ has the form $G_{|\partial_cV} + \mathrm{d}t^2+ \mathrm{d}s^2$ in a bicollar neighbourhood of the corner $\partial_c V$.
	\end{itemize}

Addition and inversion of cycls are defined by taking the disjoint union and reversing the spin structure, respectively. Two cylces are called equivalent if their difference is null-bordant and the group $\mathrm{Pos}^{\mathrm{Spin}}_{n}(\alpha)$ is defined as the quotient of the set of cycles by this equivalence relation. Again, addition of cycles induces the group operation.
  \label{def:relpos}
\end{definition}
\begin{definition}
Cycles of $\mathrm{R}_{n+1}^{\mathrm{Spin}}(\alpha)$ are given by tuples $$(Z,M,W,H\colon Z\to X,F\colon M\to X, h\colon W\to Y, g)$$ where 
\begin{itemize}
\item $M$ and $W$ are compact $n$-dimensional spin manifolds with $N\coloneqq \partial M = \partial W$ and $(M,N,F,f\coloneqq h|_N,g)$ defines a cycle of $\mathrm{Pos}^{\mathrm{Spin}}_n(\alpha)$.
\item $(Z,H,h)$ provides an equivalence of $(M, N, F, f)$ with the empty set, seen as cycles of $\mathrm{\Omega}^{\mathrm{Spin}}_n(\alpha)$, in particular we are assuming $Z$ to be manifold with corner with $\partial Z= W\cup_NM$ and corner $\partial_c Z = N$.
\end{itemize}

This tuple is null-bordant if 
	\begin{itemize}
		\item There exists $(V,E,e,G)$ providing an equivalence between $(M, N, F, f\coloneqq h|_{N}, g)$ the empty tuple seen as cycles for $\mathrm{Pos}^{\mathrm{Spin}}_n(\alpha)$. We note that $\partial_i V$ provides an equivalence between $(N,f, g_{|N})$ and the empty tuple considered as cycles of $\mathrm{Pos}^{\mathrm{Spin}}_{n-1}(Y)$.
		\item There exists $(U,L\colon U\rightarrow Y)$ where
		\begin{itemize}
			\item the spin manifold with corner $U$ is a null-bordism for $W$,
			\item $\partial_i U = \partial _i V$ and
			\item the map $L$ extends the maps $H$.
		\end{itemize}
		In particular, $(U,L,G|_{\partial_iV})$ provides an equivalence between $(W,h,g_{|N})$ and the empty tuple considered as cycles in $\mathrm{R}^{\mathrm{Spin}}_{n}(Y)$.
		\item There exists an $n+1$-dimensional spin manifold with corner $V^\prime$ with $\partial V^\prime = V \cup_{\partial_i V} U$ and a continuous map $E^\prime \colon V^\prime\to X$ extending $H$ satisfying $ E^\prime|_U = \alpha \circ L$. 
	\end{itemize}
Addition and inversion of cycles are defined by taking the disjoint union and reversing the spin structure, respectively. Two cycles are called equivalent if their difference is null-bordant and the group $\mathrm{R}_{n+1}^{\mathrm{Spin}}(\alpha)$ is defined as the quotient of the set of cycles by this equivalence relation. Again, addition of cycles induces the group operation.
\end{definition}

Given maps $\beta\colon C \rightarrow D$, $s\colon C \rightarrow Y$ and $t\colon D \rightarrow X$ which make the diagram
$$\begin{tikzcd}
C \arrow{r}{s}\arrow{d}{\beta}& Y \arrow{d}{\alpha} \\
D \arrow{r}{t} & X
\end{tikzcd}
$$
commutative, one obtains group homomorphisms
$$\Omega_n^{\mathrm{Spin}}(\beta) \rightarrow \Omega_n^{\mathrm{Spin}}(\alpha)\,, \quad\mathrm{Pos}_n^{\mathrm{Spin}}(\beta) \rightarrow \mathrm{Pos}_n^{\mathrm{Spin}}(\alpha)\quad\mbox{and} \quad\mathrm{R}_n^{\mathrm{Spin}}(\beta) \rightarrow \mathrm{R}_n^{\mathrm{Spin}}(\alpha),$$ by postcomposing the maps in the definition of cycles for these groups with $s$ and $t$ suitably. It is then easy to see that the relative $\Omega^{\mathrm{Spin}}_{n}$, $\mathrm{Pos}^{\mathrm{Spin}}_{n}$ and $\mathrm{R}^{\mathrm{Spin}}_ {n}$ groups defined above form functors on the category whose objects are continuous maps of topological spaces and where morphisms are given by pairs $(t,s)$ as above.

	First, observe that the relative groups associated to $\alpha\colon \emptyset\to X$ coincide with the absolute groups associated to $X$. Using this obsevation and the commutativity of the diagram
		\[
		\xymatrix{\emptyset\ar[d]\ar[r]& \emptyset\ar[d]\ar[r]& Y\ar[d]^\alpha\\Y\ar[r]^\alpha& X\ar[r]^{id_X}& X}
		\]
	we obtain maps
	$$\alpha_*: \Omega^{\mathrm{Spin}}_{n+1}(Y) \rightarrow \Omega^{\mathrm{Spin}}_{n+1}(X)\quad\mbox{and} \quad (\mathrm{id}_X)_*:\Omega^{\mathrm{Spin}}_{n+1}(X)\rightarrow \Omega^{\mathrm{Spin}}_{n+1}(\alpha).$$
Moreover, the following maps are well defined:
\begin{itemize}
	\item the boundary map $\mathrm{R}^{\mathrm{Spin}}_{n+1}(\alpha)\rightarrow \mathrm{Pos}^{\mathrm{Spin}}_{n}(\alpha)$ which sends the tuple given by 
	$(Z,M,W,H\colon Z\to X,F\colon M\to X, h\colon W\to Y, g)$ to $(M,F\colon M\to X, h_{|N}\colon N\to Y, g)$;
	\item the forgetful map $\mathrm{Pos}^{\mathrm{Spin}}_{n}(\alpha)\rightarrow \Omega^{\mathrm{Spin}}_n(\alpha)$ which sends the cycle given by  $(M, N, F\colon M\to X, f\colon N\to Y, g)$ to $(M, N, F\colon M\to X, f\colon N\to Y)$;
		\item the map $\Omega^{\mathrm{Spin}}_n(\alpha)\rightarrow \mathrm{R}^{\mathrm{Spin}}_{n}(\alpha)$ which sends
		$(M, N, F\colon M\to X, f\colon N\to Y)$ to the cycle $(M,\emptyset, N, F\colon M\to X,\emptyset, f\colon N\to Y, \emptyset)$.
\end{itemize}
	The following two propositions follow immediately from the definitions.
	\begin{proposition}\label{les-pair}
		There exist a long exact sequence
	\[
	\xymatrix{\cdots\ar[r]& \Omega^{\mathrm{Spin}}_{n+1}(Y)\ar[r]^{\alpha_*}&\Omega^{\mathrm{Spin}}_{n+1}(X)\ar[r]^{(\mathrm{id}_X)_*}& \Omega^{\mathrm{Spin}}_{n+1}(\alpha)\ar[r]^{\partial}& \Omega^{\mathrm{Spin}}_{n}(Y)\ar[r]&\cdots}
	\]
	where $\partial$ is given at the level of cycles by sending $(M, N, F\colon M\to X, f\colon N\to Y)$ to $(f\colon N\to Y)$. Similar long exact sequences exist for the $\mathrm{Pos}^{\mathrm{Spin}}_{*}$ and $\mathrm{R}^{\mathrm{Spin}}_ {*}$ functors. The long exact sequence for  $\mathrm{Pos}^{\mathrm{Spin}}_{*}$ terminates at  $\mathrm{Pos}^{\mathrm{Spin}}_{2}(\alpha)$ and the long exact sequence for $\mathrm{R}^{\mathrm{Spin}}_ {*}$ terminates at $\mathrm{R}^{\mathrm{Spin}}_ {3}(\alpha)$.
	
		\end{proposition}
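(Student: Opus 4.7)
The plan is to verify exactness at each of the three positions in each of the three sequences. Since in every case the maps are defined by restricting or forgetting parts of the cycle data, the ``composition is zero'' direction will be witnessed by a cylinder-type null-bordism $(-)\times[0,1]$, and the ``kernel is contained in image'' direction will be obtained by gluing representative bordisms along their internal boundaries.

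First I would treat the $\Omega^{\mathrm{Spin}}$ sequence, which carries only topological data. For exactness at $\Omega^{\mathrm{Spin}}_{n+1}(X)$, a closed spin manifold $M$ with $f\colon M\to Y$, pushed forward to $\Omega^{\mathrm{Spin}}_{n+1}(\alpha)$, is null-bordant via $V=M\times[0,1]$ with outer boundary $M\times\{0\}$ and internal boundary $M\times\{1\}$ mapped to $Y$ by $f\circ\mathrm{pr}_1$; the converse reads a null-bordism $V$ of $(M,\emptyset,F,\emptyset)$ in the relative group as a spin bordism from $M$ to the closed manifold $\partial_iV$, whose map to $Y$ provides the required preimage. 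For exactness at $\Omega^{\mathrm{Spin}}_{n+1}(\alpha)$, if $\partial[(M,N,F,f)]=0$ via some $(W,h)$, the closed manifold $M\cup_N W$ maps to $X$ via $F$ and $\alpha\circ h$, and gluing $W\times[0,1]$ to the cylinder over $M$ exhibits it as a preimage under $(\mathrm{id}_X)_*$. Exactness at $\Omega^{\mathrm{Spin}}_n(Y)$ is essentially tautological in both directions.

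Next I would transport these arguments to $\mathrm{Pos}^{\mathrm{Spin}}$ and $\mathrm{R}^{\mathrm{Spin}}$. In each cylinder I equip the manifold with the product psc metric $g+\mathrm{d}t^2$, and at each gluing I invoke the collared and bicollared assumptions in Definition \ref{def:relpos} and the following definition to ensure that the resulting metric is smooth, psc, and collared near the new boundary and corners. The genuine obstacle is the $\mathrm{R}^{\mathrm{Spin}}$ sequence, whose cycles carry a three-tier structure $(Z,M,W,H,F,h,g)$ and whose null-bordism data $(V,U,V')$ must be assembled compatibly; here I would produce each of $V$, $U$, and $V'$ separately by the corresponding step of the $\Omega^{\mathrm{Spin}}$ or $\mathrm{Pos}^{\mathrm{Spin}}$ arguments and then glue them along their common internal boundary $\partial_iV=\partial_iU$, using the bicollar conditions to smoothen the resulting corner structure. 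The truncation of the $\mathrm{Pos}$ and $\mathrm{R}$ sequences at indices $2$ and $3$ reflects the fact that the internal boundary in a $\mathrm{Pos}$-cycle must carry a collared psc metric (forcing $N$ to have positive dimension), and correspondingly $V'$ in an $\mathrm{R}$-cycle must have dimension at least three.
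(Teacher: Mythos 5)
Your proposal is correct and fills in exactly the routine verification that the paper declines to spell out: the paper offers no argument beyond the remark that the proposition ``follows immediately from the definitions,'' and your cylinder/gluing constructions are the standard way to make that precise. The only minor imprecision is in the final sentence about truncation (the constraint is that psc does not make sense on manifolds of dimension~$\le1$, so $\mathrm{Pos}^{\mathrm{Spin}}_n$ requires $n\ge2$ and correspondingly $\mathrm{R}^{\mathrm{Spin}}_{n+1}$ requires $n+1\ge3$; it has nothing to do with the bordism $V'$), but this does not affect the substance of the exactness argument.
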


 \begin{proposition}\label{relative-Stolz}
 	We have a \emph{relative} Stolz long exact sequence
 	\begin{equation}
 	\cdots \rightarrow \mathrm{R}^{\mathrm{Spin}}_{n+1}(\alpha)\rightarrow \mathrm{Pos}^{\mathrm{Spin}}_{n}(\alpha)\rightarrow \Omega^{\mathrm{Spin}}_n(\alpha)\rightarrow \mathrm{R}^{\mathrm{Spin}}_{n}(\alpha)\rightarrow \cdots
 	\end{equation}
 	Furthermore, the arrows in this exact sequence are natural transformation of functors.
      \end{proposition}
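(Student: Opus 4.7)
The plan is to verify exactness at each of the three positions $\mathrm{Pos}^{\mathrm{Spin}}_n(\alpha)$, $\Omega^{\mathrm{Spin}}_n(\alpha)$ and $\mathrm{R}^{\mathrm{Spin}}_n(\alpha)$ of the cyclic sequence, checking both the vanishing of consecutive compositions and the reverse inclusion $\mathrm{Ker}\subseteq\mathrm{Im}$. All six statements follow from the definitions by direct manipulation of the relevant manifolds-with-corners and their reference maps, exactly paralleling the classical (absolute) Stolz exact sequence but with additional bookkeeping for the map $\alpha\colon Y\to X$. Naturality in morphism pairs $(t,s)$ is then immediate from the explicit formulas defining the maps, since each map commutes with postcomposition by $s$ and $t$.

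For the vanishing of compositions, the required null-bordism is produced directly from the input cycle. At the $\mathrm{Pos}$ position, a cycle $(Z,M,W,H,F,h,g)$ of $\mathrm{R}^{\mathrm{Spin}}_{n+1}(\alpha)$ has image $(M,N,F,h|_N)$ in $\Omega^{\mathrm{Spin}}_n(\alpha)$, and the data $(Z,H,h)$ is by construction of the $\mathrm{R}$-cycle a null-bordism of this. At the $\Omega$ position, a cycle $(M,N,F,f,g)$ of $\mathrm{Pos}^{\mathrm{Spin}}_n(\alpha)$ has image $(M,\emptyset,N,F,\emptyset,f,\emptyset)$ in $\mathrm{R}^{\mathrm{Spin}}_n(\alpha)$; I exhibit the null-bordism by setting $V = M$ with its psc metric $g$ and internal boundary $\partial_i V = N$, taking $U = N\times[0,1]$ with $L = f\circ \mathrm{pr}_1$, and filling in with $V' = M\times[0,1]$ mapping to $X$ via $F\circ \mathrm{pr}_1$; the compatibility conditions then reduce to routine checks using the collared structure of $g$. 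At the $\mathrm{R}$ position, the boundary map extracts the psc portion, which is empty for cycles in the image of $\Omega\to\mathrm{R}$, so the composition vanishes trivially.

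For the reverse inclusions, the null-bordism data certifying triviality assembles into a preimage under the previous map. The cleanest case is at $\mathrm{Pos}$: if $(M,N,F,f,g)$ is null in $\Omega^{\mathrm{Spin}}_n(\alpha)$ via $(V,E,e)$, then the tuple $(V,M,\partial_i V, E, F, e, g)$ is a cycle of $\mathrm{R}^{\mathrm{Spin}}_{n+1}(\alpha)$ whose image under the boundary map is the original class. The remaining two cases proceed analogously: the psc part $V$ together with the bordism $V'$ in an $\mathrm{R}$-null-bordism of an $\Omega$-cycle assembles into a psc lift in $\mathrm{Pos}$, and a $\mathrm{Pos}$-null-bordism of the boundary of an $\mathrm{R}$-cycle glues in to extend the cycle to an $\Omega$-bordism witnessing its class in $\mathrm{R}^{\mathrm{Spin}}$. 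The main technical point is the careful tracking of corner manifolds and collared psc metrics under these gluings rather than any conceptual obstacle, which is what justifies the authors' description of the proposition as following immediately from the definitions.
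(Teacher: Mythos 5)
Your approach — verifying exactness position by position by directly exhibiting null-bordisms, with naturality following from the explicit formulas — is precisely what the paper means when it says the proposition ``follows immediately from the definitions''; the paper itself offers no further proof, so you are supplying the details the authors omit. Two small remarks: in your null-bordism witnessing that the composite $\mathrm{Pos}^{\mathrm{Spin}}_n(\alpha)\to\Omega^{\mathrm{Spin}}_n(\alpha)\to \mathrm{R}^{\mathrm{Spin}}_n(\alpha)$ vanishes, the filler $V'=M\times[0,1]$ has boundary $M\times\{0\}\cup M\times\{1\}\cup N\times[0,1]$, which is $Z\cup(V\cup_{\partial_i V}U)$ rather than just $V\cup_{\partial_i V}U$ — this is consistent with the evident intent of the null-bordism condition for $\mathrm{R}^{\mathrm{Spin}}_{n+1}(\alpha)$ (where the cycle manifold $Z$ must form the remaining face of $\partial V'$, and where the stated dimension $n+1$ for $V'$ should read $n+2$), so your construction is correct but worth flagging as relying on that reading; and your two remaining reverse inclusions are stated only as one-line recipes, which is fine for the spirit of the proposition but would need the same level of explicit manifold-with-corner bookkeeping you gave in the $\mathrm{Pos}$ case to be fully airtight.
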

      
\begin{corollary}
	If $\alpha\colon Y\to X$ is 2-connected, then $\mathrm{Pos}^{\mathrm{Spin}}_{n}(\alpha)\cong \Omega^{\mathrm{Spin}}_n(\alpha)$ for all $n\geq5$.
\end{corollary}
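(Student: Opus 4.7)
The plan is to read off the isomorphism directly from the relative Stolz long exact sequence provided by Proposition \ref{relative-Stolz}. In the relevant segment
$$\mathrm{R}^{\mathrm{Spin}}_{n+1}(\alpha) \longrightarrow \mathrm{Pos}^{\mathrm{Spin}}_{n}(\alpha) \longrightarrow \Omega^{\mathrm{Spin}}_{n}(\alpha) \longrightarrow \mathrm{R}^{\mathrm{Spin}}_{n}(\alpha),$$
the middle map is precisely the forgetful arrow we want to invert. Hence the corollary reduces to showing that $\mathrm{R}^{\mathrm{Spin}}_{k}(\alpha)=0$ for every $k \geq 5$ whenever $\alpha \colon Y \to X$ is 2-connected. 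This is the relative counterpart of the classical vanishing theorem of Stolz, which in the absolute case encodes the Gromov--Lawson existence and uniqueness of psc metrics on highly connected spin manifolds of dimension at least five.

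To prove this vanishing, I would take a cycle $(Z, M, W, H, F, h, g)$ representing a class in $\mathrm{R}^{\mathrm{Spin}}_{k}(\alpha)$ with $k \geq 5$, and construct the null-bordism data $(V, E, e, G)$, $(U, L)$ and $(V', E')$ demanded by the definition. Topological null-bordisms $V$ of $M$ over $X$ (with internal boundary mapped to $Y$), $U$ of $W$ over $Y$, and $V'$ filling $V \cup_{\partial_i V} U$, always exist for formal reasons, since the images of the class in the appropriate $\Omega$-groups vanish. The substantive task is to choose these bordisms so that $V$ additionally carries a psc metric $G$, collared near $\partial V$, restricting to $g$ on $M$ and having the product form $G|_{\partial_c V}+\mathrm{d}t^2+\mathrm{d}s^2$ in a bicollar of the corner $\partial_c V = N$. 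Using the 2-connectedness of $\alpha$, I would perform a sequence of surgeries of codimension at least three in the interior of $V$, rel boundary, on embedded spheres whose reference maps to $X$ lift through $\alpha$; this renders the pair of reference maps $(V,\partial V) \to (X,Y)$ highly connected. In dimension $\geq 5$ the Gromov--Lawson surgery theorem, in its version for manifolds with collared psc boundary, then extends $g$ across the modified $V$ to produce the desired $G$, and an entirely analogous argument yields the psc extensions needed over $U$ and the topological filling $V'$.

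The main technical obstacle is the careful bookkeeping of collars and corners imposed by Definition \ref{def:relpos}: the surgeries on the interior of $V$ must not disturb the collar of $M$, the bicollared product form near the corner $\partial_c V = N$ has to be produced and subsequently preserved, and the reference maps must be coherently extended throughout. What is ultimately needed is a version of the Gromov--Lawson--Stolz surgery theorem in the category of spin $\langle 2 \rangle$-manifolds equipped with reference maps and iterated collared psc structures, together with its relative connectivity refinement which is precisely what the 2-connectedness of $\alpha$ makes available. Once this surgery statement is in place, the vanishing of $\mathrm{R}^{\mathrm{Spin}}_{k}(\alpha)$ and hence the corollary follow mechanically from the long exact sequence.
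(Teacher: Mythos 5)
Your first step---reducing via the relative Stolz long exact sequence to showing $\mathrm{R}^{\mathrm{Spin}}_{k}(\alpha)=0$ for $k\geq 5$---matches the paper. From there, however, you set out to prove the vanishing by a direct surgery argument, essentially redoing a relative Gromov--Lawson--Stolz theorem on manifolds with corners. The paper instead takes a purely formal route: by Stolz's theorem (\cite{Stolz}), 2-connectedness of $\alpha$ implies the induced map $\alpha_*\colon \mathrm{R}^{\mathrm{Spin}}_{n}(Y)\to \mathrm{R}^{\mathrm{Spin}}_{n}(X)$ is an isomorphism for $n\geq 5$, and Proposition~\ref{les-pair} applied to the $\mathrm{R}^{\mathrm{Spin}}_{*}$ functor then forces $\mathrm{R}^{\mathrm{Spin}}_{n}(\alpha)=0$ directly from the five-lemma pattern in the long exact sequence of the pair---no surgery, no corner bookkeeping, no construction of bordism data at all. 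Your route would work if completed, but it re-proves a substantial piece of geometry that the paper simply imports as a black box, and the intended application (this corollary) gains nothing from the extra effort.

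Moreover, your sketch has a genuine gap as written. You assert that the topological null-bordisms $V$, $U$, $V'$ ``always exist for formal reasons, since the images of the class in the appropriate $\Omega$-groups vanish.'' This is not automatic: the presence of $Z$ in the cycle data gives you $[(M,N,F,f)]=0$ in $\Omega^{\mathrm{Spin}}_n(\alpha)$, but it does not by itself produce a compatible triple $(V,U,V')$ satisfying the corner and coherence conditions of the definition, and in particular does not give $[(W,h)]=0$ in $\Omega^{\mathrm{Spin}}_n(Y)$. Closing that gap would require exactly the kind of exact-sequence diagram chase the paper uses---at which point you might as well run the paper's argument and skip the surgery entirely. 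The lesson here: when an absolute vanishing theorem already exists and you have a long exact sequence relating absolute and relative groups, derive the relative vanishing formally rather than geometrically.
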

\begin{proof}
	By \cite{Stolz}, if $\alpha$ is 2-connected then $\mathrm{R}^{\mathrm{Spin}}_{n}(Y)\cong \mathrm{R}^{\mathrm{Spin}}_{n}(X)$ for all $n\geq5$. Then, by Proposition \ref{les-pair} applied to the $\mathrm{R}^{\mathrm{Spin}}_{*}$ groups, we have that $\mathrm{R}^{\mathrm{Spin}}_{n}(\alpha)$ is trivial for all $n \geq 5$. Finally, by Proposition \ref{relative-Stolz}, we obtain the wished result.
\end{proof}

\section{Index-theoretic preliminaries}
In this section, we recall some background material from higher index theory on manifolds with boundary. For a more general, yet concise review of related material we refer the reader to \cite{PS07}*{Sections 2 \& 3}.

Let $W$ be a compact spin manifold with boundary $X$. Let $\mathcal{A}$ be a $C^*$-algebra and $\mathcal{L}$ be a flat bundle of finitely generated projective Hilbert $C^*$-modules over $\mathcal{A}$. Denote by $\slashed{D}_W$ the spin Dirac operator of $W$. This is a first order differential operator acting on sections of the spinor bundle $\slashed{S}_W$ on $W$. Twisting $\slashed{D}_W$ by the bundle $\mathcal{L}$ we obtain an $\mathcal{A}$-linear differential operator $\slashed{D}_{W,\mathcal{L}}$ acting on the smooth sections of $\slashed{S}\otimes \mathcal{L}$. In what follows, we will suppose that the Riemannian metric of $W$ is collared near the boundary and has positive scalar curvature there. Then one can define a higher index $\ind(\slashed{D}_{W,\mathcal{L}})$ for the operator $\slashed{D}_{W,\mathcal{L}}$ in $K_{\mathrm{dim}W}(\mathcal{A})$.
\begin{remark}
A priori, $\ind(\slashed{D}_{W,\mathcal{L}})$ depends on the positive scalar curvature metric on the boundary. If the metric has positive scalar curvature everywhere, then $\ind(\slashed{D}_{W,\mathcal{L}})$ vanishes. We will need this fact in the proof of the well-definedness of the relative $L^2$-$\rho$-invariant.
\end{remark}
Denote by $\overline{[\mathcal{A},\mathcal{A}]}$ the closure of the linear subspace of $\mathcal{A}$ generated by elements of the form $ab-ba$ with $a,b \in \mathcal{A}$. The quotient $\mathcal{A}/\overline{[\mathcal{A},\mathcal{A}]}$ is called the abelianisation of $\mathcal{A}$ and will be denoted by $\mathcal{A}_{\mathrm{ab}}$. The canonical projection map $\mathcal{A} \rightarrow \mathcal{A}_{\mathrm{ab}}$ then defines a trace on $\mathcal{A}$. We will refer to this map the algebraic trace and denote it by $\mathrm{tr}^{\mathrm{alg}}$. Note that any trace on $\mathcal{A}$ factors uniquely through the algebraic trace. The algebraic trace then induces a map
$$K_0(\mathcal{A}) \rightarrow \mathcal{A}_{\mathrm{ab}},$$
which we will also denote by $\mathrm{tr}^{\mathrm{alg}}$.

\begin{definition}
Suppose that the manifold $W$ is even-dimensional. The $\mathcal{A}_{\mathrm{ab}}$-valued index of the operator $\slashed{D}_{W,\mathcal{L}}$ is defined to be the image of $\ind(\slashed{D}_{W,\mathcal{L}})$ under the algebraic trace map and will be denoted by $\ind_{[0]}(\slashed{D}_{W,\mathcal{L}})$.
\end{definition}
We will now recall a higher APS index formula for the $\mathcal{A}_{\mathrm{ab}}$-valued index. First, we define the $\mathcal{A}_{\mathrm{ab}}$-valued $\eta$-invariant of the twisted Dirac operator on the boundary. Denote by $\slashed{D}_X$ the spin Dirac operator on $X$ and by $\slashed{D}_{X,\mathcal{L}}$ the Dirac operator twisted with the restriction of $\mathcal{L}$ to $X$. The operator $\slashed{D}_{X,\mathcal{L}}e^{-t\slashed{D}_{X,\mathcal{L}}^2}$ is then a smoothing operator in the Mischenko-Fomenko calculus for each $t>0$. Let $K_t$ denote the kernel of the latter operator. For each $x \in X$, $K_t(x,x)$ can then be seen as an $\mathcal{A}$-linear endomorphism of $(\slashed{S}_X\otimes\mathcal{L})_x$ and thus we can take its algebraic trace, which we will denote by $\mathrm{tr}^{\mathrm{alg}}(K_t(x,x))$. See \cite{Sch05}*{Proposition 2.26 \& Lemma 2.29} for more details.
\begin{definition}
\begin{itemize}
\item Define $\mathrm{TR}$ by the following integral
$$\mathrm{TR}(\slashed{D}_{X,\mathcal{L}}e^{-t\slashed{D}_{X,\mathcal{L}}^2}):=\int_X\mathrm{tr}^{\mathrm{alg}}(K_t(x,x))\mathrm{d}x \in\mathcal{A}_{ab}.$$

\item The $\mathcal{A}_{\mathrm{ab}}$-valued $\eta$-invariant of $\slashed{D}_{X,\mathcal{L}}$ is defined by the integral
$$\eta_{[0]}(\slashed{D}_{X,\mathcal{L}}):=\frac{1}{\sqrt{\pi}}\int_0^\infty\mathrm{TR}(\slashed{D}_{X,\mathcal{L}}e^{-t\slashed{D}_{X,\mathcal{L}}^2})\frac{\mathrm{d}t}{\sqrt{t}}\,\in\mathcal{A}_{ab}.$$
\end{itemize}
\end{definition}
\begin{remark}
The above definition makes sense even if we forget about $W$ and start with a flat bundle $\mathcal{L}$ of finitely generated projective $\mathcal{A}$-modules on a closed odd-dimensional spin manifold $X$ which has positive scalar curvature.
\end{remark}
\begin{theorem}[\cite{PS07}*{Theorem 2.24}]
The $\mathcal{A}_{\mathrm{ab}}$-valued index $ind_{[0]}(\slashed{D}_{W,\mathcal{L}})$ is given by
$$\int_W\widehat{A}(W)(x)\wedge\mathrm{ch}(\mathcal{L})(x)\mathrm{d}x -\frac{1}{2}\eta_{[0]}(\slashed{D}_{X,\mathcal{L}}).$$
Here, $\widehat{A}(W)(x)$ denotes the value of the differential form constructed using Chern-Weil theory representing the $\widehat{A}$-class of the manifold $W$ and $\mathrm{ch}(\mathcal{L})$ denotes the $\mathcal{A}_{\mathrm{ab}}$-valued differential form representing the Chern character of the bundle $\mathcal{L}$ over $X$ (\cite{Sch05}*{Definition 4.1}).
\label{thm:higherAPS}
\end{theorem}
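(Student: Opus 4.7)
The plan is to adapt the classical Atiyah--Patodi--Singer strategy to the Mishchenko--Fomenko setting and then push everything through the algebraic trace. First I would form the manifold with cylindrical end $W_\infty := W \cup_X (X \times [0,\infty))$, extending $\mathcal{L}$ flatly along the cylinder and using the collared product metric. Since the boundary metric has positive scalar curvature, the Lichnerowicz formula together with the product structure on the cylinder shows that $\slashed{D}_{X,\mathcal{L}}$ is $L^2$-invertible in the Mishchenko--Fomenko calculus, so $\slashed{D}_{W_\infty,\mathcal{L}}$ is Fredholm over $\mathcal{A}$ and its index in $K_0(\mathcal{A})$ coincides with $\ind(\slashed{D}_{W,\mathcal{L}})$. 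Applying the algebraic trace and the McKean--Singer identity then gives, for every $t>0$,
\[
\ind_{[0]}(\slashed{D}_{W,\mathcal{L}}) = \int_{W_\infty} \mathrm{tr}^{\mathrm{alg}}\bigl(\mathrm{str}\, k_t(x,x)\bigr)\,\mathrm{d}x,
\]
where $k_t$ is the Schwartz kernel of $e^{-t\slashed{D}_{W_\infty,\mathcal{L}}^2}$ and $\mathrm{str}$ denotes the fibrewise supertrace.

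Next, I would analyse the $t\to 0^+$ and $t\to\infty$ limits of this identity and integrate the difference. Getzler rescaling applied pointwise to $\mathrm{tr}^{\mathrm{alg}} \circ \mathrm{str}\, k_t(x,x)$ produces, in the short-time limit on the interior, the local index density $\widehat{A}(W)(x)\wedge \mathrm{ch}(\mathcal{L})(x)$, now valued in $\mathcal{A}_{\mathrm{ab}}$ via the Chern--Weil construction of \cite{Sch05}. On the cylindrical end, the product structure lets one separate variables: the heat kernel factorises as $k_t^{\mathbb{R}}(s,s') \cdot k_t^{X,\mathcal{L}}(x,x')$ acting suitably, and integration in $s$ produces the half-line contribution which, after combining with $1/\sqrt{\pi}$ and the substitution $u=t s^{-2}$, is exactly responsible for the factor $-\tfrac{1}{2}\eta_{[0]}(\slashed{D}_{X,\mathcal{L}})$. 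As $t\to\infty$ the invertibility of $\slashed{D}_{X,\mathcal{L}}$ and exponential decay along the cylinder show the contribution from the cylindrical region tends to $0$, while the compact piece contributes the remaining bulk term; putting the two limits together yields the claimed formula.

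The main obstacle, and where care is required beyond the scalar case, is to make precise the trace-class and integrability statements in the Mishchenko--Fomenko calculus with values in $\mathcal{A}_{\mathrm{ab}}$. One has to verify that the pointwise expression $\mathrm{tr}^{\mathrm{alg}}(\mathrm{str}\, k_t(x,x))$ is a well-defined element of $\mathcal{A}_{\mathrm{ab}}$ depending continuously on $x$, and that the integral defining $\eta_{[0]}$ converges at both ends of the $t$-integral; the large-$t$ convergence uses the positive scalar curvature of the boundary together with $\mathcal{A}$-linear spectral gap estimates, while the small-$t$ convergence needs the odd-dimensional cancellation that makes the $t\to 0$ singularity of $\mathrm{TR}(\slashed{D}_{X,\mathcal{L}} e^{-t\slashed{D}_{X,\mathcal{L}}^2})$ of order $O(\sqrt{t})$. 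Once these are in place together with Stokes-type arguments on the cylinder, exactly as in \cite{PS07}*{Section 2}, the identification of the boundary term with $-\tfrac{1}{2}\eta_{[0]}(\slashed{D}_{X,\mathcal{L}})$ is formal.
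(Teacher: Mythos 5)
The paper does not prove Theorem~\ref{thm:higherAPS} at all: it is imported wholesale from \cite{PS07}*{Theorem 2.24}, with the surrounding definitions ($\mathcal{A}_{\mathrm{ab}}$-valued kernels, Chern--Weil forms) delegated to \cite{Sch05}. So strictly speaking there is no proof in this paper to compare against; your proposal is an attempt to reconstruct the argument of the cited source.

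Your overall route (cylindrical attachment, invertibility of $\slashed{D}_{X,\mathcal{L}}$ from the Lichnerowicz formula, Getzler rescaling for the interior density, separation of variables on the cylinder to produce the $\eta$-term) is the correct skeleton of the Piazza--Schick/Melrose approach. However, there is a genuine error in the way you set it up. You claim that the McKean--Singer identity gives
\[
\ind_{[0]}(\slashed{D}_{W,\mathcal{L}}) \;=\; \int_{W_\infty}\mathrm{tr}^{\mathrm{alg}}\bigl(\mathrm{str}\,k_t(x,x)\bigr)\,\mathrm{d}x \quad\text{for every }t>0,
\]
and then go on to ``analyse the $t\to 0^+$ and $t\to\infty$ limits and integrate the difference.'' These two statements contradict each other: if the right-hand side were $t$-independent there would be no difference between the limits and no room for the $\eta$-term to appear. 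On the noncompact manifold $W_\infty$ the heat operator is not of trace class, so the integral on the right is not even literally defined without a regularisation (Melrose's $b$-trace, or an explicit subtraction of the free cylinder contribution), and once it is defined it is genuinely $t$-dependent. The correct mechanism is precisely this $t$-dependence: one computes the $t$-derivative of the regularised supertrace, identifies it (via the product structure on the cylinder) with a constant multiple of $\mathrm{TR}(\slashed{D}_{X,\mathcal{L}}e^{-t\slashed{D}_{X,\mathcal{L}}^2})/\sqrt{t}$, and integrates from $0$ to $\infty$; the large-$t$ endpoint gives $\ind_{[0]}$ (using invertibility of the boundary operator), the small-$t$ endpoint gives the $\widehat{A}\wedge\mathrm{ch}$ integral over the interior, and the integrated derivative is $-\tfrac12\eta_{[0]}(\slashed{D}_{X,\mathcal{L}})$. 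You gesture at this transgression in the second paragraph, but the opening McKean--Singer claim as written is false and would need to be replaced by the regularised, $t$-dependent formulation before the rest of the sketch can be carried through.
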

\section{The relative $L^2$-$\rho$-invariant}\label{sec:defrelrho}
Let $\Lambda$ and $\Gamma$ be discrete groups and let $\varphi\colon\Lambda \rightarrow \Gamma$ be a group homomorphism. $\varphi$ induces a map $\mathrm{B}\varphi\colon\mathrm{B}\Lambda \rightarrow \mathrm{B}\Gamma$, which can be assumed to be injective\footnote{Replace $\mathrm{B}\Gamma$ by the mapping cylinder of $\mathrm{B}\varphi$.}. We will construct a group homomorphism
$$\rho^{\varphi}_{2}\colon\mathrm{Pos}_n^{\mathrm{Spin}}(\mathrm{B}\varphi) \rightarrow \reals$$
for all odd $n$ using $\eta$-invariants of suitable Dirac operators.

Denote by $\mathrm{Im}(\varphi)^\Gamma$ the normal subgroup generated by the image of $\Lambda$ in $\Gamma$ and by $\mathcal{N}(\Gamma^{}/\mathrm{Im}(\varphi)^\Gamma)$ the von Neumann algebra of the group $\Gamma^{}/\mathrm{Im}(\varphi)^\Gamma$. Denote by $\pi_r\colon\Gamma \rightarrow \Gamma^{}/\mathrm{Im}(\varphi)^\Gamma$ the canonical projection map and by $\pi_{\mathrm{triv}}\colon \Gamma \rightarrow \Gamma^{}/\mathrm{Im}(\varphi)^\Gamma$ the trivial group homomorphism. The homomorphisms $\pi_r$ and $\pi_{\mathrm{triv}}$ give rise to a homomorphism $\pi_{r}*\pi_{\mathrm{triv}}\colon \Gamma *_{\Lambda}\Gamma \rightarrow \Gamma^{}/\mathrm{Im}(\varphi)^\Gamma$ by the universal property of the amalgamated product. Let $\Pi$ be the composition
\begin{equation}\label{Pi}\Gamma *_{\Lambda}\Gamma \xrightarrow{\pi_{r}*\pi_{\mathrm{triv}}} \frac{\Gamma}{\mathrm{Im}(\varphi)^\Gamma} \hookrightarrow \mathcal{N}\left(\frac{\Gamma}{\mathrm{Im}(\varphi)^\Gamma}\right),\end{equation}
where the second arrow is the inclusion of $\Gamma^{}/\mathrm{Im}(\varphi)^\Gamma$ into its von Neumann algebra. There are two natural traces on $\mathcal{N}(\Gamma^{}/\mathrm{Im}(\varphi)^\Gamma)$: the algebraic trace
$$\mathrm{tr}^{\mathrm{alg}}\colon \mathcal{N}\left(\frac{\Gamma}{\mathrm{Im}(\varphi)^\Gamma}\right) \rightarrow \mathcal{N}\left(\frac{\Gamma}{\mathrm{Im}(\varphi)^\Gamma}\right)_{\mathrm{ab}}$$
and the canonical trace
$$\tau_{\mathrm{can}}\colon \mathcal{N}\left(\frac{\Gamma}{\mathrm{Im}(\varphi)^\Gamma}\right) \rightarrow \mathbb{C}$$
which is the continuous extension of the canonical trace $\mathbb{C}(\Gamma^{}/\mathrm{Im}(\varphi)^\Gamma) \rightarrow \mathbb{C}$ on the group ring, mapping an element to the coefficient of the neutral element. Note that the canonical trace factorises through the universal trace: $\tau_{\mathrm{can}} = \tau^\prime_{\mathrm{can}} \circ \mathrm{tr}^{\mathrm{alg}}$ with $\tau^\prime_{\mathrm{can}}$ a unique linear map $\mathcal{N}(\Gamma^{}/\mathrm{Im}(\varphi)^\Gamma)_{\mathrm{ab}} \rightarrow \mathbb{C}$.

In the following, let $n \in \mathbb{N}$ be odd. A class in $\mathrm{Pos}_n^{\mathrm{Spin}}(\mathrm{B}\varphi)$ is represented by a tuple $(M,N,c,g)$ with $M$ a compact $n$-dimensional spin manifold with boundary $N$, endowed with a positive scalar curvature metric $g$ which is collared near the boundary and a continuous map $c\colon (M,N) \rightarrow (\mathrm{B}\Gamma,\mathrm{B}\Lambda)$\footnote{We are using the assumption that the map $\mathrm{B}\varphi$ is injective.}. In the notation of Definition~\ref{def:relpos}, this cycle would be denoted by $(M,N,c,c|_N,g)$. Denote by $\mathrm{D}M$ the double $M \cup_N (-M)$ of $M$, where $-M$ denotes the manifold $M$ with the reversed spin structure. 
Since the metric of $M$ is collared near the boundary, we naturally obtain a Riemannian metric on the double with positive scalar curvature.  The map $c$ then induces a map
$$\mathrm{D}c\colon\mathrm{D}M \rightarrow \mathrm{B}\Gamma\cup_{\mathrm{B}\Lambda}\mathrm{B}\Gamma \rightarrow \mathrm{B}(\Gamma *_{\Lambda}\Gamma).$$
Denote by $\widetilde{\mathrm{D}M}$ the cover associated to the map $\mathrm{D}c$, Consider the bundle of von Neumann algebras
$$\mathcal{L} \coloneqq \widetilde{\mathrm{D}M} \times_{\Gamma *_{\Lambda}\Gamma}\mathcal{N}\left(\frac{\Gamma}{\mathrm{Im}(\varphi)^\Gamma}\right)$$
where an element $\Gamma *_{\Lambda}\Gamma$ acts on $\mathcal{N}(\Gamma^{}/\mathrm{Im}(\varphi)^\Gamma)$ via left multiplication by its image under the map $\Pi$ from \eqref{Pi}. Let $\slashed{D}_{\mathrm{D}M,\mathcal{L}}$ be the spin Dirac operator of $\mathrm{D}M$ twisted by $\mathcal{L}$. Finally we can define the relative $L^2$-$\rho$-invariant.
\begin{definition}
The relative $L^2$-$\rho$-invariant of a cycle $(M,N,c,g)$ representing a class in $\mathrm{Pos}_n^{\mathrm{Spin}}(\mathrm{B}\varphi)$ is
$$\rho^{\varphi}_{(2)}(M,N,c,g) \coloneqq \tau^\prime_{\mathrm{can}}(\eta_{[0]}(\slashed{D}_{\mathrm{D}M,\mathcal{L}})).$$
\end{definition}
\begin{proposition}
The relative $L^2$-$\rho$-invariant descends to a group homomorphism
 $$\rho^{\varphi}_{(2)}\colon \mathrm{Pos}_n^{\mathrm{Spin}}(\mathrm{B}\varphi) \rightarrow \reals.$$

\end{proposition}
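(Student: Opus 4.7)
My plan is to prove the proposition in two steps: additivity under disjoint union, and vanishing on null-bordant cycles. Additivity is automatic: the double of a disjoint union is the disjoint union of doubles, the twisted Dirac operator splits accordingly, and the $\eta$-invariant (and hence the algebraic-trace $\eta_{[0]}$ and the canonical trace applied to it) is additive on disjoint unions. The group homomorphism property then follows once well-definedness on bordism classes is established, together with the observation that reversing the spin structure changes the sign of $\eta$.

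For the null-bordism invariance, suppose $(M,N,c,g)$ is null-bordant in $\mathrm{Pos}_n^{\mathrm{Spin}}(\mathrm{B}\varphi)$ via $(V,E,e,G)$, so $V$ is an $(n+1)$-dimensional manifold with corner satisfying $\partial V = M\cup\partial_i V$, $\partial_c V = N$, with $E\colon V\to \mathrm{B}\Gamma$, $e\colon\partial_iV\to \mathrm{B}\Lambda$ compatible under $\mathrm{B}\varphi$, and $G$ a psc metric that restricts to $g$ on $M$ and has the product form $G|_{\partial_c V}+\mathrm{d}t^2+\mathrm{d}s^2$ near the corner. The plan is to form the double $\mathrm{D}V = V\cup_{\partial_i V}(-V)$, which is a compact even-dimensional spin manifold whose boundary is $\mathrm{D}M$ and which carries a natural psc metric $\mathrm{D}G$ (psc everywhere, collared near the boundary $\mathrm{D}M$, thanks to the collaring of $G$ near $\partial V$ and the bicollaring near the corner).

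Using compatibility of $E$ and $e$ with $\mathrm{B}\varphi$, the maps on $V$ and $-V$ glue to a map $\mathrm{D}E\colon \mathrm{D}V\to \mathrm{B}\Gamma\cup_{\mathrm{B}\Lambda}\mathrm{B}\Gamma\to \mathrm{B}(\Gamma*_\Lambda\Gamma)$ whose restriction to $\mathrm{D}M$ is $\mathrm{D}c$. Pulling back via $\Pi$ yields a flat bundle of $\mathcal{N}(\Gamma/\mathrm{Im}(\varphi)^\Gamma)$-modules $\widetilde{\mathcal{L}}$ on $\mathrm{D}V$ restricting to $\mathcal{L}$ on $\mathrm{D}M$. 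Applying the higher APS index formula of Theorem~\ref{thm:higherAPS} to $\slashed{D}_{\mathrm{D}V,\widetilde{\mathcal{L}}}$ gives
\[
\ind_{[0]}(\slashed{D}_{\mathrm{D}V,\widetilde{\mathcal{L}}}) \;=\; \int_{\mathrm{D}V}\widehat{A}(\mathrm{D}V)\wedge\mathrm{ch}(\widetilde{\mathcal{L}}) \;-\; \tfrac{1}{2}\eta_{[0]}(\slashed{D}_{\mathrm{D}M,\mathcal{L}}).
\]
The left-hand side vanishes because $\mathrm{D}G$ has positive scalar curvature everywhere, by the remark following the definition of the index. The bundle $\widetilde{\mathcal{L}}$ is flat, so its Chern character is concentrated in degree zero and equals the class of $1_{\mathcal{N}}$; hence the integral reduces to $\bigl(\int_{\mathrm{D}V}\widehat{A}(\mathrm{D}V)\bigr)\cdot[1_{\mathcal{N}}]$. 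The orientation-reversing involution of $\mathrm{D}V$ that swaps the two copies of $V$ is an isometry for $\mathrm{D}G$ and fixes $\widehat{A}$ pointwise, while reversing the orientation; therefore $\int_{\mathrm{D}V}\widehat{A}(\mathrm{D}V) = 0$. It follows that $\eta_{[0]}(\slashed{D}_{\mathrm{D}M,\mathcal{L}}) = 0$ in $\mathcal{N}(\Gamma/\mathrm{Im}(\varphi)^\Gamma)_{\mathrm{ab}}$, and applying $\tau'_{\mathrm{can}}$ gives $\rho^\varphi_{(2)}(M,N,c,g)=0$.

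The main obstacle I anticipate is verifying that the double $\mathrm{D}V$ and the extended map $\mathrm{D}E$ are actually well-defined and smooth: the corner structure of $V$, the product form of $G$ near $\partial_cV$, and the compatibility conditions $E|_{\partial_iV}=\mathrm{B}\varphi\circ e$, $e|_N=c|_N$ all have to cooperate so that doubling $V$ along $\partial_iV$ produces a smooth manifold with smooth psc metric having $\mathrm{D}M$ as a collared boundary, and so that the classifying map extends consistently to $\mathrm{B}(\Gamma*_\Lambda\Gamma)$. Once this bookkeeping is in place, the index-formula computation is essentially forced.
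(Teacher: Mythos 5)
Your proof is correct and follows essentially the same route as the paper: double the null-bordism $V$ along its internal boundary to obtain an even-dimensional spin psc manifold with boundary $\mathrm{D}M$, apply the higher APS index formula with the flat $\mathcal{N}(\Gamma/\mathrm{Im}(\varphi)^\Gamma)$-bundle, use positivity of scalar curvature to kill the index, flatness to reduce the Chern character term to $\int \widehat{A}$, and the orientation-reversing symmetry to kill that integral. The only cosmetic difference is that the paper simply splits $\int_W \widehat{A} = \int_V \widehat{A} + \int_{-V}\widehat{A} = 0$, whereas you phrase this via the involution swapping the two copies; these are the same observation.
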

\begin{proof}
We need to show that if $(M,N,c,g)$ and $(M^\prime,N^\prime,c^\prime,g^\prime)$ represent the same class in $\mathrm{Pos}_n^{\mathrm{Spin}}(\mathrm{B}\varphi)$, then
$$\rho^{\varphi}_{(2)}(M,N,c,g) = \rho^{\varphi}_{(2)}(M^\prime,N^\prime,c^\prime,g^\prime).$$
This is equivalent to showing that if $(M,N,c,g)$ is null-bordant, namely represents zero in $\mathrm{Pos}^{\mathrm{Spin}}_{n}(\mathrm{B}\varphi)$, then $\rho^{\varphi}_{(2)}(M,N,c,g) = 0$.

Let $(M,N,c,g)$ be null-bordant. This means, there exists a tuple $(V,b,b|_N,G)$ as in Definition \ref{def:relpos} providing an equivalence between $(M,N,c,c|_N,g)$ and the empty set.
Consider the manifold $W\coloneqq V \cup_{\partial_i V} -V$, which is a compact spin manifold with boundary $\mathrm{D}M$. 
Since $G$ is collared near the boundary and bicollared near the corner of $V$, we obtain naturally a psc metric on $W$ which is collared near the boundary. On the boundary this metric coincides with the psc metric on $\mathrm{D}M$ defined using $g$. Furthermore, using $b$ we get a map $\mathrm{D}b\colon W \rightarrow \mathrm{B}\Gamma\cup_{\mathrm{B}\Lambda}\mathrm{B}\Gamma \rightarrow \mathrm{B}(\Gamma *_{\Lambda}\Gamma)$ making the diagram
$$\begin{tikzcd}
    W \arrow{rd}{\mathrm{D}b} & \\
    DM \arrow{u} \arrow{r}{\mathrm{D}c} & \mathrm{B}(\Gamma *_{\Lambda}\Gamma)
   \end{tikzcd}
$$
commutative. Let $\widetilde{W}$ be the cover associated to $Db$ and set
$$\mathcal{L}_{W} \coloneqq \widetilde{W} \times_{\Gamma *_{\Lambda}\Gamma}\mathcal{N}\left(\frac{\Gamma}{\mathrm{Im}(\varphi)^\Gamma}\right)$$
where the action of $\Gamma *_{\Lambda}\Gamma$ on $\mathcal{N}(\Gamma^{}/\mathrm{Im}(\varphi)^\Gamma)$ is, as above, induced by $\Pi$. Clearly, $\mathcal{L}_{W}$ restricts to $\mathcal{L}$ on $DM$. Since the metric on $\mathrm{D}M$ has positive scalar curvature and $W$ is even-dimensional, one can define an index $\ind(D_{W,\mathcal{L}}) \in K_0(\mathcal{N}(\Gamma/\mathrm{Im}(\varphi)^\Gamma))$. By Theorem \ref{thm:higherAPS}, we have
$$\mathrm{tr}^{\mathrm{alg}}(\ind(D_{W,\mathcal{L}})) = \ind_{[0]}(D_{W,\mathcal{L}}) = \int_{W}\widehat{A}(W)(X)\wedge \mathrm{ch}(\mathcal{L})(x)\mathrm{d}x -\frac{1}{2}\eta_{[0]}(\slashed{D}_{\mathrm{D}M,\mathcal{L}}).$$
Since $\mathcal{L}$ is a flat bundle with each fibre a free $\mathcal{N}(\Gamma^{}/\mathrm{Im}(\varphi)^\Gamma)$-module of rank $1$, we have $\mathrm{ch}(\mathcal{L})(x) = 1 \in \mathcal{N}(\Gamma^{}/\mathrm{Im}(\varphi)^\Gamma)_{\mathrm{ab}}$. Hence,
$$\ind_{[0]}(D_{W,\mathcal{L}}) = \int_{W}\widehat{A}(W)(X)\mathrm{d}x\cdot1 -\frac{1}{2}\eta_{[0]}(\slashed{D}_{\mathrm{D}M,\mathcal{L}}).$$
On the other hand,
$$\int_{W}\widehat{A}(W) = \int_{V}\widehat{A}(W) + \int_{-V}\widehat{A}(W) = 0.$$
The metric on $W$ has positive scalar curvature; therefore, $\ind(D_{W,\mathcal{L}})$ and hence $\ind_{[0]}(D_{W,\mathcal{L}})$ vanish. This implies that
$$\eta_{[0]}(\slashed{D}_{\mathrm{D}M,\mathcal{L}}) = 0$$
Applying $\tau^\prime_{\mathrm{can}}$ to both sides gives
$$\rho^{\varphi}_{(2)}(M,N,c,g) = 0.$$
Thus we have shown that if $(M,N,c,g)$ represents the zero class in $\mathrm{Pos}^{\mathrm{Spin}}_n(\mathrm{B}\varphi)$ then $\rho^{\varphi}_{(2)}(M,N,c,g) = 0$, which proves that $\rho^{\varphi}_{(2)}$ descends to a map
$$\mathrm{Pos}_n^{\mathrm{Spin}}(\mathrm{B}\varphi) \rightarrow \reals.$$
Since addition in $\mathrm{Pos}_n^{\mathrm{Spin}}(\mathrm{B}\varphi)$ is induced by taking disjoint unions, this map is clearly a group homomorphism.
\end{proof}
\begin{remark}
An element $\gamma \in \Gamma^{}/\mathrm{Im}(\Lambda)^\Gamma$ whose conjugacy class has polynomial growth gives rise to a trace $K_0(\mathcal{N}(\Gamma^{}/\mathrm{Im}(\Lambda)^\Gamma)) \rightarrow \mathbb{C}$. We can then define a relative version of John Lott's delocalised  $\eta$-invariant (see \cite{L92,L92II}) by applying this trace to $\eta_{[0]}(\slashed{D}_{\mathrm{D}M,\mathcal{L}})$. The techniques used below, combined with the approach of \cite{PS07II}, should give an analogue of Theorem \ref{thm:geometricresult} for $4k+1$-dimensional manifolds without much difficulty.
\end{remark}
\begin{remark}
A pair of finite dimensional representations of $\Gamma$ agreeing on $\Lambda$ gives rise to a representation of $\Gamma *_{\Lambda}\Gamma$, which can be used to define a finite dimensional flat bundle on $\mathrm{D}M$ (see above for the notation). The $\eta$-invariant of the spin Dirac operator of $\mathrm{D}M$ twisted with the latter bundle can be shown to be an invariant of the bordism class of the psc metric on $M$ (using an argument similar to the one used in the proof of the previous theorem). This can be seen as a generalisation of the APS $\rho$-invariant associated to a pair of representations for manifolds with boundary and seems to be strongly related to the invariants considered by Bunke in \cite{B15}, which have inspired our constuction.
\end{remark}
Now we show that the relative $L^2$-$\rho$-invariant is equal to the $L^2$ $\eta$-invariant of a suitable $\Gamma^{}/\mathrm{Im}(\varphi)^\Gamma$-cover of $\mathrm{D}M$.

We first recall one definition of the $L^2$-$\eta$-invariant. Let $X$ be a closed odd-dimensional spin manifold and $\widetilde{X} \rightarrow X$ be a $G$-cover of $X$ for some discrete group $G$. Denote by $\widetilde{\slashed{D}}$ the Dirac operator on $\widetilde{X}$, constructed using the pullback of the metric on $X$, and by $k^\prime_t$ the Schwartz kernel of the operator $\widetilde{\slashed{D}} e^{-t(\widetilde{\slashed{D}})^2}$. Then the $L^2$-$\eta$-invariant of the spin Dirac operator associated to the cover $\widetilde{X} \rightarrow X$ is given by
\begin{equation}\label{eta:formula}\eta_{(2)}(\widetilde{\slashed{D}}) \coloneqq \frac{1}{\sqrt{\pi}}\int_0^\infty\left(\int_\mathcal{F}k_t(x,x)\mathrm{d}x\right)\frac{\mathrm{d}t}{\sqrt{t}}\end{equation}
where $\mathcal{F}$ is any fundamental domain of the action of $G$ on $\widetilde{X}$.
The homomorphism $\pi_r*\pi_{\mathrm{triv}}$ gives rise to a map $\mathrm{B}(\pi_r*\pi_{\mathrm{triv}})\colon \mathrm{B}(\Gamma*_\Lambda\Gamma) \rightarrow \mathrm{B}\left(\frac{\Gamma}{\mathrm{Im}(\varphi)^\Gamma}\right)$ of classifying spaces. Denote by $(\mathrm{D}c)^\prime$ the composition
\begin{equation}\label{Dc'}\mathrm{D}M \xrightarrow{\mathrm{D}c} \mathrm{B}(\Gamma*_\Lambda\Gamma) \xrightarrow{\mathrm{B}(\pi_r*\pi_{\mathrm{triv}})} \mathrm{B}\left(\frac{\Gamma}{\mathrm{Im}(\varphi)^\Gamma}\right).\end{equation}
\begin{proposition} The relative $L^2$-$\rho$-invariant
$\rho_2(M,N,g,c)$ is given by the $L^2$-$\eta$-invariant of the cover of $\mathrm{D}M$ associated to the classifying map $(\mathrm{D}c)^\prime$.
\label{prop:relrhol2eta}
\end{proposition}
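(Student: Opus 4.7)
The strategy is to identify the twisting bundle $\mathcal{L}$ as the Mischenko-Fomenko bundle of the $G \coloneqq \Gamma/\mathrm{Im}(\varphi)^\Gamma$-cover $\widehat{\mathrm{D}M}$ of $\mathrm{D}M$ classified by $(\mathrm{D}c)^\prime$, and then to invoke the standard identification between the canonical trace of the $\mathcal{N}(G)_{\mathrm{ab}}$-valued $\eta$-invariant of a twisted Dirac operator and the $L^2$-$\eta$-invariant of the lifted operator on this cover.

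First I would unwind the construction of $\mathcal{L}$. Let $K \coloneqq \ker(\pi_r * \pi_{\mathrm{triv}}) \trianglelefteq \Gamma *_\Lambda \Gamma$; since $\pi_r$ is surjective, so is $\pi_r * \pi_{\mathrm{triv}}$, and the quotient $(\Gamma *_\Lambda \Gamma)/K$ is canonically $G$. By construction, the action of $\Gamma *_\Lambda \Gamma$ on $\mathcal{N}(G)$ used to define $\mathcal{L}$ factors through the left-regular action of $G$ via $\Pi$, so we have a canonical isomorphism
$$\mathcal{L} \;=\; \widetilde{\mathrm{D}M} \times_{\Gamma *_\Lambda \Gamma}\mathcal{N}(G) \;\cong\; \widehat{\mathrm{D}M} \times_{G}\mathcal{N}(G), \qquad \widehat{\mathrm{D}M} \coloneqq \widetilde{\mathrm{D}M}/K.$$
Pulling back universal bundles identifies $\widehat{\mathrm{D}M} \to \mathrm{D}M$ with the principal $G$-cover classified by $\mathrm{B}(\pi_r * \pi_{\mathrm{triv}})\circ \mathrm{D}c = (\mathrm{D}c)^\prime$, so $\mathcal{L}$ is exactly the Mischenko-Fomenko bundle of the cover appearing in the statement.

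Next I would invoke the standard trace comparison. For any closed odd-dimensional psc spin manifold $X$ equipped with a $G$-cover $\widehat X$ and the Mischenko-Fomenko bundle $\mathcal{L}_G \coloneqq \widehat X \times_G \mathcal{N}(G)$, the Schwartz kernel $K_t$ of $\slashed{D}_{X,\mathcal{L}_G}e^{-t\slashed{D}_{X,\mathcal{L}_G}^2}$ satisfies
$$\tau_{\mathrm{can}}\bigl(\mathrm{tr}^{\mathrm{alg}}(K_t(x,x))\bigr) \;=\; \mathrm{tr}\bigl(k_t(\widehat x, \widehat x)\bigr)$$
at every $x \in X$ and every lift $\widehat x \in \widehat X$, where $k_t$ is the Schwartz kernel of $\widehat{\slashed{D}} e^{-t\widehat{\slashed{D}}^2}$ for the lifted Dirac operator $\widehat{\slashed{D}}$ on $\widehat X$. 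This is the standard identification of the Mischenko-Fomenko canonical trace with the pointwise trace on the cover (see \cite{Sch05}). Integrating over $X$, which corresponds to integrating $k_t$ over a fundamental domain of $\widehat X$, and then against $\tfrac{1}{\sqrt{\pi t}}\,dt$ on $(0,\infty)$, the identity \eqref{eta:formula} yields
$$\tau_{\mathrm{can}}\bigl(\eta_{[0]}(\slashed{D}_{X,\mathcal{L}_G})\bigr) \;=\; \eta_{(2)}(\widehat{\slashed{D}}).$$

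Combining these two steps for $X = \mathrm{D}M$ and $\widehat X = \widehat{\mathrm{D}M}$, and using the factorisation $\tau_{\mathrm{can}} = \tau^\prime_{\mathrm{can}}\circ\mathrm{tr}^{\mathrm{alg}}$, we obtain
$$\rho^\varphi_{(2)}(M,N,c,g) \;=\; \tau^\prime_{\mathrm{can}}\bigl(\eta_{[0]}(\slashed{D}_{\mathrm{D}M,\mathcal{L}})\bigr) \;=\; \eta_{(2)}(\widehat{\slashed{D}}),$$
which is the desired statement. The main subtlety is the justification of exchanging $\tau^\prime_{\mathrm{can}}$ with the improper integral defining $\eta_{[0]}$ and pulling the fundamental-domain integration inside the pointwise trace identity. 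This is where the psc hypothesis on the collared metric, and hence on the metric on $\mathrm{D}M$, is essential: the resulting uniform spectral gap of $\slashed{D}_{\mathrm{D}M}$ (and, via a standard comparison argument, of $\widehat{\slashed{D}}$ on the cover) gives exponential decay of both integrands as $t\to\infty$, so Fubini applies and the pointwise identity integrates to the global one.
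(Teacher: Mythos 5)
Your proof takes essentially the same route as the paper: identify $\mathcal{L}$ with the Mischenko--Fomenko module bundle of the $G = \Gamma/\mathrm{Im}(\varphi)^\Gamma$-cover of $\mathrm{D}M$ classified by $(\mathrm{D}c)^\prime$, and then appeal to the standard comparison between $\tau_{\mathrm{can}}$ applied to the $\mathcal{N}(G)_{\mathrm{ab}}$-valued $\eta$-invariant and the $L^2$-$\eta$-invariant of the cover (the paper cites \cite{PS07}*{Proposition E.11} for this, which you unpack into the pointwise-trace and Fubini argument). The reasoning is correct and your additional care with the large-time convergence via the psc spectral gap is a sound gloss on the cited result, not a genuinely different argument.
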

\begin{proof}
Denote by $(\mathrm{D}M)^\prime$ the cover associated to $(\mathrm{D}c)^\prime$. Thus, we have $(\mathrm{D}M)^\prime = \widetilde{\mathrm{D}M}/\mathrm{Ker}(\pi_r*\pi_{\mathrm{triv}})$. The map
$$\widetilde{\mathrm{D}M} \times \mathcal{N}\left(\frac{\Gamma}{\mathrm{Im}(\varphi)^\Gamma}\right) \rightarrow (\mathrm{D}c)^\prime \times \mathcal{N}\left(\frac{\Gamma}{\mathrm{Im}(\varphi)^\Gamma}\right)$$
which is the natural projection on the first component and the identity on the second component induces an isomorphism of the bundles $\mathcal{L}=\widetilde{\mathrm{D}M} \times_{\Gamma *_{\Lambda}\Gamma}\mathcal{N}\left(\frac{\Gamma}{\mathrm{Im}(\varphi)^\Gamma}\right)$
and $(\mathrm{D}M)^\prime \times_{\frac{\Gamma}{\mathrm{Im}(\varphi)^\Gamma}} \mathcal{N}\left(\frac{\Gamma}{\mathrm{Im}(\varphi)^\Gamma}\right)$.
The $\eta$-invariant of the Dirac operator on $\mathrm{D}M$ twisted with the bundle $(\mathrm{D}M)^\prime \times_{\frac{\Gamma}{\mathrm{Im}(\varphi)^\Gamma}} \mathcal{N}\left(\frac{\Gamma}{\mathrm{Im}(\Lambda)^\Gamma}\right)$ is the $L^2$-$\eta$-invariant of the cover $(\mathrm{D}M)^\prime$ (\cite{PS07}*{Proposition E.11}).
\end{proof}

\section{Basic properties of the relative $L^2$-$\rho$-invariant}
In this section, we relate the relative $L^2$-$\rho$-invariant to the classical $L^2$-$\rho$-invariant and establish some of its fundamental properties.

The classical $L^2$-$\rho$-invariant gives rise to a group homomorphism
$$\rho^{G}_{(2)}\colon \mathrm{Pos}^{\mathrm{Spin}}_n(\mathrm{B}G) \rightarrow \reals,$$
for odd $n$. A cycle for $\mathrm{Pos}^{\mathrm{Spin}}_n(\mathrm{B}G)$ is a tuple $(M,c,g)$ with $M$ a closed $n$-dimensional spin manifold, $g$ a positive scalar curvature metric on $M$ and $c$ a continuous map $M \rightarrow \mathrm{B}G$. Denote by $\widetilde{M}$ the $G$-cover associated to $c$ and by $\widetilde{\slashed{D}}$ and $\slashed{D}$ the spin Dirac operator on $\widetilde{M}$ and $M$ respectively. The classical $L^2$-$\rho$-invariant of $[(M,c,g)]$ is then defined to be
\begin{equation}\label{classical-rho}\rho_{(2)}^G[(M,c,g)]:=\eta_{(2)}(\widetilde{\slashed{D}}) - \eta(\slashed{D}),\end{equation} where $\eta(\slashed{D})$ is the usual $\eta$-invariant of $\slashed{D}$ introduced in \cite{APS75I}. Let $H$ be a further discrete group and $f\colon  H\rightarrow G$ a group homomorphism. Then there is a natural map
$$\mathrm{Pos}^{\mathrm{Spin}}_n(\mathrm{B}G) \rightarrow \mathrm{Pos}^{\mathrm{Spin}}_n(\mathrm{B}f)$$
induced by sending a cycle $(M,c,g)$ for $\mathrm{Pos}^{\mathrm{Spin}}_n(\mathrm{B}G)$ to the cycle $(M,\varnothing,c,\varnothing \rightarrow \mathrm{B}H,g)$.
\begin{remark}
We use the notation from the above discussion. There is a natural map $\Omega^{\mathrm{Spin}}_n(\mathrm{B}G) \rightarrow \Omega^{\mathrm{Spin}}_n(\mathrm{B}f)$ induced by sending a cycle $(M,c)$ for $\Omega^{\mathrm{Spin}}_n(\mathrm{B}G)$ to the cycle $(M,\varnothing,c,\varnothing \rightarrow \mathrm{B}H)$. The diagram
$$
\begin{tikzcd}
\mathrm{Pos}^{\mathrm{Spin}}_n(\mathrm{B}G) \arrow{r} \arrow{d} & \Omega^{\mathrm{Spin}}_n(\mathrm{B}G) \arrow{d} \\
\mathrm{Pos}^{\mathrm{Spin}}_n(\mathrm{B}f) \arrow{r} & \Omega^{\mathrm{Spin}}_n(\mathrm{B}f)
\end{tikzcd}
$$
is commutative.
\label{rem:posomegacommu}
\end{remark}
\begin{proposition}
Let $G$ be a discrete group and let $f\colon 1 \rightarrow G$ be the trivial homomorphism from the trivial group into $G$. Then the diagram
$$\begin{tikzcd}
\mathrm{Pos}^{\mathrm{Spin}}_n(\mathrm{B}G) \arrow{rd}[swap]{\rho^{G}_{(2)}} \arrow{r} & \mathrm{Pos}^{\mathrm{Spin}}_n(\mathrm{B}f) \arrow{d}{\rho^{f}_{(2)}} \\
 & \mathbb{R}
 \end{tikzcd}
$$
is commutative.
\label{prop:Gfcommu}
\end{proposition}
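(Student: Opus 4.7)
The plan is to unwind the construction of the relative $L^2$-$\rho$-invariant in this degenerate case and compare it with the formula \eqref{classical-rho} for the classical invariant, using Proposition~\ref{prop:relrhol2eta} to realise the relative invariant as an honest $L^2$-$\eta$-invariant on a cover of the double.

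First, I set $\Lambda=1$, $\Gamma=G$ and $\varphi=f$. Then $\mathrm{Im}(\varphi)^{\Gamma}=1$, so the quotient $\Gamma/\mathrm{Im}(\varphi)^{\Gamma}$ is just $G$, the projection $\pi_r$ is the identity, and $\pi_{\mathrm{triv}}$ is the constant homomorphism. Moreover $\Gamma\ast_{\Lambda}\Gamma=G\ast G$, and the homomorphism $\pi_r\ast\pi_{\mathrm{triv}}\colon G\ast G\to G$ is the identity on the first factor and trivial on the second factor. The cycle assigned to $(M,c,g)$ in $\mathrm{Pos}^{\mathrm{Spin}}_n(\mathrm{B}f)$ has empty boundary, so the double is the disjoint union $\mathrm{D}M=M\sqcup(-M)$, and the classifying map $\mathrm{D}c\colon \mathrm{D}M\to \mathrm{B}(G\ast G)$ hits the first copy of $\mathrm{B}G$ via $c$ on $M$ and the second copy of $\mathrm{B}G$ via $c$ on $-M$.

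Next I invoke Proposition~\ref{prop:relrhol2eta} to rewrite the relative invariant as
\[
\rho^{\,f}_{(2)}(M,\varnothing,c,g)=\eta_{(2)}\!\left((\mathrm{D}M)'\right),
\]
where $(\mathrm{D}M)'$ is the cover of $\mathrm{D}M$ classified by $(\mathrm{D}c)'=\mathrm{B}(\pi_r\ast\pi_{\mathrm{triv}})\circ \mathrm{D}c$. By the explicit form of $\pi_r\ast\pi_{\mathrm{triv}}$ worked out above, $(\mathrm{D}c)'|_M=c$ while $(\mathrm{D}c)'|_{-M}$ is constant. Hence the cover decomposes as the disjoint union $(\mathrm{D}M)'=\widetilde{M}\sqcup\bigl((-M)\times G\bigr)$, where $\widetilde{M}\to M$ is the $G$-cover classified by $c$, and the second summand is the trivial $G$-cover of $-M$.

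Since the formula \eqref{eta:formula} is local in the cover, $\eta_{(2)}$ splits over the two components. For the trivial cover, one can take $-M$ itself as fundamental domain, and the pullback Dirac operator on $(-M)\times G$ is a disjoint union of copies of the Dirac operator on $-M$, so the fundamental-domain integral reproduces the classical $\eta$-invariant: $\eta_{(2)}((-M)\times G)=\eta(\slashed{D}_{-M})=-\eta(\slashed{D}_M)$, where the sign flip comes from the reversal of spin structure. Putting these together,
\[
\rho^{\,f}_{(2)}(M,\varnothing,c,g)=\eta_{(2)}(\widetilde{\slashed{D}})-\eta(\slashed{D})=\rho^{G}_{(2)}(M,c,g),
\]
which is the required commutativity.

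The only subtle step is the naturality argument identifying the pulled-back cover on the $-M$-component as trivial and correctly tracking the sign coming from the reversed spin structure; the rest is bookkeeping with the definitions. Once Proposition~\ref{prop:relrhol2eta} is available this is essentially automatic.
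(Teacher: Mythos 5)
Your argument is essentially identical to the paper's own proof: both unwind the definitions for $\Lambda=1$, use Proposition~\ref{prop:relrhol2eta} to rewrite $\rho^{f}_{(2)}$ as an $L^2$-$\eta$-invariant of the cover of $\mathrm{D}M = M\sqcup(-M)$ classified by $(\mathrm{D}c)'$, observe that this cover splits as $\widetilde{M}\sqcup$(trivial $G$-cover of $-M$), and then read off $\eta_{(2)}(\widetilde{\slashed{D}})-\eta(\slashed{D})=\rho^{G}_{(2)}$. The only cosmetic difference is that you make the sign from the reversed spin structure on $-M$ explicit, which the paper leaves implicit.
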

\begin{proof}
Let $(M,c,g)$ be a cycle for $\mathrm{Pos}^{\mathrm{Spin}}_n(\mathrm{B}G)$. We use the notation from the previous section. $\mathrm{D}M$ is then just the disjoint union $M \sqcup -M$. Since $\mathrm{B}1$ can be taken to be a point, we have $\mathrm{B}G\cup_{\mathrm{B}1}\mathrm{B}G = \mathrm{B}G \vee\mathrm{B}G$, the wedge sum. Analogously to \eqref{Dc'}, the map 
$$\mathrm{B}G \vee\mathrm{B}G \rightarrow \mathrm{B}(G*G) \rightarrow \mathrm{B}G$$  induced by mapping the first component of $G*G$ via the identity and the second component trivially to $G$ is the identity on the first copy of $\mathrm{B}G$ and crushes the second copy to the base point $\mathrm{B}1$. Thus the map $(\mathrm{D}c)^\prime\colon \mathrm{D}M = M \sqcup -M \rightarrow \mathrm{B}G$ restricted to $M$ is $c$ and restricted to $-M$ is the map to a point. Hence, the cover of $\mathrm{D}M$ associated to $(\mathrm{D}c)^\prime$ is the disjoint union of the $G$-cover of $M$ associated to $c$ and the trivial $G$-cover of $-M$. The $L^2$-$\eta$-invariant associated to this cover is thus the difference of the $L^2$-$\eta$-invariant associated to the cover classified by $c$ and the classical $\eta$-invariant of $M$. By Proposition \ref{prop:relrhol2eta}, this is the same as $\rho^{f}_{(2)}[(M,\varnothing,c,\varnothing \rightarrow \mathrm{B}H,g)]$. On the other hand, by \eqref{classical-rho}, the latter difference is simply the definition of $\rho^{G}_{(2)}[(M,c,g)]$. The claim follows.
\end{proof}
As above, let $G$ and $H$ be a pair of discrete groups and let $f\colon H \rightarrow G$ be a group homomorphism. Furthermore, suppose there exist group homomorphisms $s\colon H\rightarrow \Lambda$ and $t\colon G \rightarrow \Gamma$ making the diagram
$$
\begin{tikzcd}
H \arrow{r}{s}\arrow{d}{f} & \Lambda \arrow{d}{\varphi} \\
G \arrow{r}{t} & \Gamma \\
\end{tikzcd}
$$
commutative. The pair $(t,s)$ then gives rise to a map of pairs $B(t,s)\colon (BG,BH) \rightarrow (B\Gamma,B\Lambda)$ and the map
$$\mathrm{B}(t,s)_*\colon \mathrm{Pos}^{\mathrm{Spin}}_n(\mathrm{B}f) \rightarrow \mathrm{Pos}^{\mathrm{Spin}}_n(\mathrm{B}\varphi)$$
is obtained by postcomposing the classifying maps for a cycle of $\mathrm{Pos}^{\mathrm{Spin}}_n(\mathrm{B}f)$ with $\mathrm{B}(t,s)$. Moreover, the commutativity of the diagram implies that $t$ induces a homomorphism $p\colon  \frac{G}{\mathrm{Im}(H)^G} \rightarrow \frac{\Gamma}{\mathrm{Im}(\Lambda)^\Gamma}$.
\begin{proposition}
Let $G$ and $H$ be as above and suppose that the homomorphism $p\colon \frac{G}{\mathrm{Im}(H)^G} \rightarrow \frac{\Gamma}{\mathrm{Im}(\Lambda)^\Gamma}$ is injective.
Then the diagram
$$
\begin{tikzcd}
\mathrm{Pos}^{\mathrm{Spin}}_n(\mathrm{B}f) \arrow{rr} {\mathrm{B}(t,s)_*}\arrow{rd}[swap]{\rho^{f}_{(2)}} & & \mathrm{Pos}^{\mathrm{Spin}}_n(\mathrm{B}\varphi) \arrow{ld}{\rho^{\varphi}_{(2)}}\\
& \reals & \\
\end{tikzcd}
$$
is commutative.
\label{prop:injectiverhotwocommu}
\end{proposition}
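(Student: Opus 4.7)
The strategy is to reduce both sides of the desired equality to $L^2$-$\eta$-invariants via Proposition~\ref{prop:relrhol2eta} and then exploit the injectivity of $p$ to identify the relevant covers of $\mathrm{D}M$.

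First, I would fix a cycle $(M,N,c,g)$ representing a class in $\mathrm{Pos}^{\mathrm{Spin}}_n(\mathrm{B}f)$; its image under $\mathrm{B}(t,s)_*$ is represented by $(M,N,\mathrm{B}t\circ c,\mathrm{B}s\circ c|_N,g)$. By Proposition~\ref{prop:relrhol2eta}, $\rho^{f}_{(2)}(M,N,c,g)$ equals the $L^2$-$\eta$-invariant of the $G/\mathrm{Im}(H)^G$-cover of $\mathrm{D}M$ classified by the map $(\mathrm{D}c)'$ of~\eqref{Dc'}; similarly, $\rho^{\varphi}_{(2)}(\mathrm{B}(t,s)_*[M,N,c,g])$ equals the $L^2$-$\eta$-invariant of the $\Gamma/\mathrm{Im}(\Lambda)^\Gamma$-cover of $\mathrm{D}M$ classified by $(\mathrm{D}(\mathrm{B}t\circ c))'$.

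Second, naturality of the amalgamation and quotient constructions, combined with the commutativity of the square relating $f$, $\varphi$, $s$, $t$, yields a commutative diagram
$$
\begin{tikzcd}
G*_HG \arrow{r}\arrow{d} & G/\mathrm{Im}(H)^G \arrow{d}{p} \\
\Gamma*_\Lambda\Gamma \arrow{r} & \Gamma/\mathrm{Im}(\Lambda)^\Gamma
\end{tikzcd}
$$
and hence the identity $(\mathrm{D}(\mathrm{B}t\circ c))' = \mathrm{B}p\circ (\mathrm{D}c)'$ of maps $\mathrm{D}M\to\mathrm{B}(\Gamma/\mathrm{Im}(\Lambda)^\Gamma)$. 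Consequently the $\Gamma/\mathrm{Im}(\Lambda)^\Gamma$-cover of $\mathrm{D}M$ is obtained from the $G/\mathrm{Im}(H)^G$-cover by induction along $p$.

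Third, I would use that $p$ is injective to conclude that this induced principal bundle is, as a Riemannian spin cover of $\mathrm{D}M$, a disjoint union of copies of the $G/\mathrm{Im}(H)^G$-cover, indexed by the coset space $p(G/\mathrm{Im}(H)^G)\backslash\Gamma/\mathrm{Im}(\Lambda)^\Gamma$. Thus any fundamental domain for the $G/\mathrm{Im}(H)^G$-action on the smaller cover is simultaneously a fundamental domain for the $\Gamma/\mathrm{Im}(\Lambda)^\Gamma$-action on the larger cover, and the lifted spin Dirac operators restrict to the same operator there. Applying the defining formula~\eqref{eta:formula} on this common fundamental domain, the two $L^2$-$\eta$-invariants coincide and the proposition follows.

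The only subtle step is the third one: one must check that injectivity of $p$ genuinely forces the induced cover to split as a topological disjoint union of copies of the original cover, so that the Schwartz kernel of $\widetilde{\slashed{D}}e^{-t\widetilde{\slashed{D}}^2}$ at the diagonal agrees pointwise on the chosen fundamental domain. This is essentially a standard property of $L^2$-$\eta$-invariants under inclusions of covering groups, but it is precisely the place where the injectivity hypothesis on $p$ is needed.
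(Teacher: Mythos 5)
Your proposal is correct and follows essentially the same route as the paper: reduce both $\rho$-invariants to $L^2$-$\eta$-invariants via Proposition~\ref{prop:relrhol2eta}, use the (homotopy) commutativity of the diagram relating the classifying maps to identify the $\Gamma/\mathrm{Im}(\Lambda)^\Gamma$-cover of $\mathrm{D}M$ with the one induced along $\mathrm{B}p$ from the $G/\mathrm{Im}(H)^G$-cover, and then use injectivity of $p$ to see this induced cover as a disjoint union of copies of the smaller one, so that formula~\eqref{eta:formula} gives the same answer. The paper presents the relevant diagram at the level of classifying spaces, but the content is identical to your group-level diagram and its induced map identity.
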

\begin{proof}
Denote by $\pi_{r}^{G}$ and $\pi_{r}^{\Gamma}$ the canonical projection maps $G \rightarrow G^{}/\mathrm{Im}(H)^{G}$ and $\Gamma \rightarrow \Gamma^{}/\mathrm{Im}(\Lambda)^{\Gamma}$, respectively. Let $\pi_{\mathrm{triv}}^{G}\colon  G \rightarrow G^{}/\mathrm{Im}(H)^{G}$ and $\pi_{\mathrm{triv}}^{\Gamma}\colon \Gamma \rightarrow \Gamma^{}/\mathrm{Im}(\Lambda)^{\Gamma}$ be the trivial group homomorphisms.
Consider the diagram
$$
\begin{tikzcd}
& \mathrm{B}(G*_HG)\arrow{rr}{\mathrm{B}(\pi_{r}^{G}*\pi_{\mathrm{triv}}^{G})}\arrow{dd} & & \mathrm{B}(\frac{G}{\mathrm{Im}(H)^G}) \arrow{dd}{\mathrm{B}p} \\
\mathrm{D}M\arrow{ru}{\mathrm{D}c} \arrow{rd}[swap]{\mathrm{D}(B(t,s))\circ c} & & \\
& \mathrm{B}(\Gamma*_\Lambda\Gamma) \arrow{rr}{\mathrm{B}(\pi_{r}^{\Gamma}*\pi_{\mathrm{triv}}^{\Gamma})} & & \mathrm{B}\left(\frac{\Gamma}{\mathrm{Im}(\varphi)^\Gamma}\right)\\
\end{tikzcd},
$$
where the maps $\mathrm{D}c$ and $\mathrm{D}(B(t,s)\circ c)$ denote the ``doubles" of $c$ and $\mathrm{B}(t,s)\circ c$ and where the maps of the classifying spaces are induced by the maps of the corresponding groups. The induced maps of the classifying spaces can be chosen in such a way to make the diagram commutative up to homotopy.

Let $[(M,N,g,c)]$ be an element of $\mathrm{Pos}^{\mathrm{Spin}}_n(\mathrm{B}f)$. We need to show that $\rho^{f}_{(2)}(M,N,g,c) = \rho^{\varphi}_{(2)}(M,N,g,\mathrm{B}(t,s)\circ c)$. From Proposition \ref{prop:relrhol2eta}, we know that $\rho^{f}_{(2)}(M,N,g,c)$ is the $L^2$-$\eta$-invariant of the cover of $\mathrm{D}M$ associated to $\mathrm{B}(\pi_{r}^{G}*\pi_{\mathrm{triv}}^{G}) \circ \mathrm{D}c$ and $\rho_2(M,N,g,\mathrm{B}(t,s)\circ c)$ is the $L^2$ $\eta$-invariant of the cover of $\mathrm{D}M$ associated to $\mathrm{B}(\pi_{r}^{\Gamma}*\pi_{\mathrm{triv}}^{\Gamma}) \circ \mathrm{D}(\mathrm{B}(t,s)\circ c)$. The (homotopy) commutativity of the above diagram implies that the cover of $\mathrm{D}M$ associated to $\mathrm{B}(\pi_{r}^{\Gamma}*\pi_{\mathrm{triv}}^{\Gamma}) \circ \mathrm{D}(\mathrm{B}(t,s)\circ c)$ is the same as the cover associated to $\mathrm{B}p \circ \mathrm{B}(\pi_{r}^{G}*\pi_{\mathrm{triv}}^{G}) \circ \mathrm{D}c$. Since $p$ is injective, the latter cover is a disjoint union of copies of the cover of $\mathrm{D}M$ associated to the map $\mathrm{B}(\pi_{r}^{G}*\pi_{\mathrm{triv}}^{G}) \circ \mathrm{D}c$. The claim then follows from the explicit formula  \eqref{eta:formula} for the $L^2$-$\eta$-invariant.
\end{proof}
In the rest of this section, $M$ will denote a compact odd dimensional spin manifold with boundary $N$. Denote by $\mathcal{R}^{\mathrm{sc}>0}(M,N)$ the space of positive scalar curvature metrics on $M$ which are collared near the boundary. Denote by $\iota_*$ the map $\pi_1(N) \rightarrow \pi_1(M)$ induced by the inclusion $\iota\colon  N \rightarrow M$. Let $c\colon  M \rightarrow \mathrm{B}\Gamma$ and $c_N\colon N \rightarrow \mathrm{B}\Lambda$ denote the classifying maps of the universal covers of $M$ and $N$, respectively. Consider $\mathrm{B}\iota_*\colon  \mathrm{B}\Lambda \rightarrow \mathrm{B}\Gamma$, which we will assume to be injective. The latter maps can be chosen to make the diagram
$$
\begin{tikzcd}
N \arrow{r}{c_N} \arrow{d}{\iota} & \mathrm{B}\Lambda \arrow{d}{\mathrm{B}\iota_*}\\
M \arrow{r}{c} & \mathrm{B}\Gamma
\end{tikzcd}.
$$
Thus, $c$ can be seen as a map of pairs $(M,N)\rightarrow (\mathrm{B}\Gamma,\mathrm{B}\Lambda)$.
There is a canonical map
$$\mathrm{cl}\colon \mathcal{R}^{\mathrm{sc}>0}(M,N) \rightarrow \mathrm{Pos}^{\mathrm{Spin}}_{\mathrm{dim} M}(\mathrm{B}\iota_*)$$
sending a psc metric $g$ to the cycle represented by $[(M,N,c,g)]$.
\begin{definition}
Two metrics $g_1,g_2 \in \mathcal{R}^{\mathrm{sc}>0}(M,N)$ are said to be $(\pi_1(M),\pi_1(N))$-bordant if $\mathrm{cl}(g_1) = \mathrm{cl}(g_2)$. Moreover, $(\pi_1(M),\pi_1(N))$-bordism defines an equivalence relation on $\mathcal{R}^{\mathrm{sc}>0}(M,N)$. We will denote the quotient by $\mathcal{R}_{\mathrm{bord}}^{\mathrm{sc}>0}(M,N)$.
\end{definition}
\begin{remark}
Note that $\mathrm{cl}\colon \mathcal{R}^{\mathrm{sc}>0}(M,N) \rightarrow \mathrm{Pos}^{\mathrm{Spin}}_{\mathrm{dim} M}(\mathrm{B}\iota_*)$ induces an inclusion of $\mathcal{R}_{\mathrm{bord}}^{\mathrm{sc}>0}(M,N)$ into $\mathrm{Pos}^{\mathrm{Spin}}_{\mathrm{dim} M}(\mathrm{B}\iota_*)$.
\end{remark}
Denote by $\mathrm{Diff}^{\mathrm{Spin}}(M)$ the group of spin structure preserving diffeomorphisms of the manifold $M$. This group acts on $\mathcal{R}^{\mathrm{sc}>0}(M,N)$ via pullbacks. Proposition \ref{prop:constantonorbits} is an analogue of \cite{PS07II}*{Proposition 2.10} and in fact its proof can be almost reduced to the proof of \cite{PS07II}*{Proposition 2.10} by Piazza and Schick.
\begin{proposition}
The composition
$$\mathcal{R}^{\mathrm{sc}>0}(M,N) \rightarrow \mathcal{R}_{\mathrm{bord}}^{\mathrm{sc}>0}(M,N) \hookrightarrow \mathrm{Pos}^{\mathrm{Spin}}_{\mathrm{dim} M}(\mathrm{B}\iota_*) \xrightarrow{\rho^{\iota_*}_{(2)}} \mathbb{R}$$
is constant on the orbits of the action of $\mathrm{Diff}^{\mathrm{Spin}}(M)$.
\label{prop:constantonorbits}
\end{proposition}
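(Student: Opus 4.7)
The plan is to follow Piazza--Schick's proof of \cite{PS07II}*{Proposition 2.10} closely, accounting for the extra boundary data. Let $\psi \in \mathrm{Diff}^{\mathrm{Spin}}(M)$ and $g \in \mathcal{R}^{\mathrm{sc}>0}(M,N)$. The goal is to show
$$\rho^{\iota_*}_{(2)}[(M, N, c, \psi^* g)] = \rho^{\iota_*}_{(2)}[(M, N, c, g)].$$
The first step is an easy reduction: since $\psi$ preserves $N$ setwise and restricts to a spin diffeomorphism of $N$, pulling back along $\psi$ realises $(M, N, c, c_N, g)$ and $(M, N, c \circ \psi, c_N \circ \psi|_N, \psi^* g)$ as isomorphic cycles in $\mathrm{Pos}^{\mathrm{Spin}}_n(\mathrm{B}\iota_*)$. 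Since $\rho^{\iota_*}_{(2)}$ is well defined on $\mathrm{Pos}$-classes, this gives $\rho^{\iota_*}_{(2)}[(M, N, c, g)] = \rho^{\iota_*}_{(2)}[(M, N, c \circ \psi, \psi^* g)]$, so it remains to prove
$$\rho^{\iota_*}_{(2)}[(M, N, c, \psi^* g)] = \rho^{\iota_*}_{(2)}[(M, N, c \circ \psi, \psi^* g)].$$
Here the underlying Riemannian manifold is unchanged, and only the classifying map has been altered by the automorphism $\psi_*$ of $\Gamma = \pi_1(M)$ (well defined up to an inner automorphism arising from a basepoint change, which causes no issue).

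The second step uses the characterisation of the invariant given in Proposition~\ref{prop:relrhol2eta}. Because $\psi(N) = N$, the automorphism $\psi_*$ sends $\Lambda = \pi_1(N)$ into itself, and hence preserves the normal closure $\mathrm{Im}(\iota_*)^\Gamma$; it therefore descends to an automorphism $\bar\psi_*$ of $\Gamma/\mathrm{Im}(\iota_*)^\Gamma$. The induced $*$-automorphism of $\mathcal{N}(\Gamma/\mathrm{Im}(\iota_*)^\Gamma)$ permutes the standard basis and therefore preserves $\tau_{\mathrm{can}}$. The two covers of $\mathrm{D}M$ classified by the respective maps $(\mathrm{D}c)'$ and $(\mathrm{D}(c \circ \psi))'$ into $\mathrm{B}(\Gamma/\mathrm{Im}(\iota_*)^\Gamma)$ differ by postcomposition with $\bar\psi_*$, i.e.\ by a $\tau_{\mathrm{can}}$-preserving relabelling of the deck transformations. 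The explicit integral formula~\eqref{eta:formula} for the $L^2$-$\eta$-invariant is manifestly invariant under such a relabelling, since the integrand on the diagonal is unchanged and any fundamental domain may be used.

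The step requiring the most care -- and essentially the only point genuinely beyond the closed case -- is the interaction of $\mathrm{D}\psi$ with the asymmetric quotient $\pi_r*\pi_{\mathrm{triv}}\colon \Gamma *_\Lambda \Gamma \to \Gamma/\mathrm{Im}(\iota_*)^\Gamma$. One must verify that $\bar\psi_*$ intertwines $(\mathrm{D}c)'$ with $(\mathrm{D}(c\circ\psi))'$ in a homotopy-commutative diagram on $\mathrm{D}M$. Because $\pi_{\mathrm{triv}}$ is tautologically invariant under any endomorphism of the second copy of $\Gamma$, and $\bar\psi_* \circ \pi_r = \pi_r \circ \psi_*$ by the very definition of $\bar\psi_*$, the asymmetry of $\pi_r * \pi_{\mathrm{triv}}$ poses no genuine difficulty and the identification of covers proceeds exactly as in the closed case. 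Composing with $\tau^\prime_{\mathrm{can}}$ then yields the required equality.
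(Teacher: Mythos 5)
Your argument is correct and takes essentially the same route as the paper: both reduce, via Proposition~\ref{prop:relrhol2eta}, to comparing $L^2$-$\eta$-invariants of covers of $\mathrm{D}M$ that coincide up to a relabelling of the deck group, and both hinge on the observation that $\psi_*$ preserves $\mathrm{Im}(\iota_*)^\Gamma$ so that the automorphism of $\Gamma *_\Lambda \Gamma$ induced by $\mathrm{D}\psi$ is compatible with $\ker(\pi_r * \pi_{\mathrm{triv}})$. The paper phrases this by lifting $\mathrm{D}\Psi$ to the universal cover of $\mathrm{D}M$, invoking \cite{PS07II}*{Proposition 2.10}, and then descending (leaving the compatibility check implicit), whereas you work directly at the level of classifying maps and deck-group relabelling and make that check explicit; the underlying content is the same.
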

\begin{proof}
We only sketch the proof. Let $\Psi \in \mathrm{Diff}^{\mathrm{Spin}}(M)$. By ``doubling" $\Psi$, we obtain a spin structure preserving diffeomorphism of the double $\mathrm{D}M$ of $M$. Let $g \in \mathcal{R}^{\mathrm{sc}>0}(M,N)$ and denote by $\mathrm{D}g$ the metric on $\mathrm{D}M$ obtained by doubling $g$. 

By \cite{PS07II}*{Proposition 2.10} the spin Dirac operators on the universal cover $\widetilde{\mathrm{D}M}$ of $\mathrm{D}M$ constructed using the pullbacks of $\mathrm{D}g\in \mathcal{R}^{\mathrm{sc}>0}(M,N)$ and $(\mathrm{D}\Psi)^*(\mathrm{D}g)$ are unitarily equivalent.
 We thus have a unitary equivalence of the spin Dirac operators on the quotient of $\widetilde{\mathrm{D}M}$ by the kernel of the homomorphism $\Gamma *_\Lambda \Gamma \rightarrow \Gamma$ defined in the beginning of Section \ref{sec:defrelrho}.
 
  Hence, the $L^2$-$\eta$-invariant of the latter operators are equal. On the other hand, by Proposition \ref{prop:relrhol2eta}, these $L^2$-$\eta$-invariants coincide with the relative $L^2$-$\rho$-invariants associated to the psc metrics $g$ and $\Psi^*g$. The claim follows.
\end{proof}
\begin{corollary}
The map $\mathcal{R}^{\mathrm{sc}>0}(M,N)\to \mathbb{R}$ of Proposition \ref{prop:constantonorbits} descends to a map $\pi_0(\mathcal{R}^{\mathrm{sc}>0}(M,N)^{}/\mathrm{Diff}^{\mathrm{Spin}}(M)) \rightarrow \mathbb{R}$.
\end{corollary}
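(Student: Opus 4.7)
The plan is to combine the diffeomorphism invariance supplied by Proposition \ref{prop:constantonorbits} with an isotopy-invariance statement and then invoke a standard path-lifting argument in the quotient. Concretely, I would establish two facts: (a) $\rho^{\iota_*}_{(2)}$ is constant on $\mathrm{Diff}^{\mathrm{Spin}}(M)$-orbits, which is exactly Proposition \ref{prop:constantonorbits}; (b) $\rho^{\iota_*}_{(2)}\circ\mathrm{cl}$ is constant along continuous paths in $\mathcal{R}^{\mathrm{sc}>0}(M,N)$. From these two facts one concludes that the map is constant on path components of $\mathcal{R}^{\mathrm{sc}>0}(M,N)/\mathrm{Diff}^{\mathrm{Spin}}(M)$, hence descends to the desired map on $\pi_0$.

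For (b), let $\{g_t\}_{t\in[0,1]}$ be a smooth path in $\mathcal{R}^{\mathrm{sc}>0}(M,N)$ joining $g_0$ to $g_1$. I would produce a psc null-bordism of the cycle $(M,N,c,g_0)\sqcup(-M,-N,c,g_1)$ in the sense of Definition \ref{def:relpos}. Take $V:=M\times[0,T]$, viewed as a manifold with internal boundary $N\times[0,T]$ and corner $N\times\{0,T\}$, together with the map $c\circ\mathrm{pr}_M\colon V\to \mathrm{B}\Gamma$. Choose a smooth monotone reparametrization $\phi\colon[0,1]\to[0,1]$ that equals $0$ near $0$ and $1$ near $1$, and equip $V$ with the metric $G_T:=g_{\phi(s/T)}+\mathrm{d}s^2$. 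By the standard formula for the scalar curvature of a product-type family, the $s$-derivative contribution to $\mathrm{scal}(G_T)$ decays like $1/T^2$, while the pointwise scalar curvature of each $g_t$ admits a uniform positive lower bound on the compact $M$; hence $G_T$ has positive scalar curvature for $T$ sufficiently large. Since $\phi$ is locally constant near the endpoints, $G_T$ is collared near $M\times\{0,T\}$, and since each $g_t$ is already collared near $N$, the metric $G_T$ has the bicollared product form required at the corner. This gives the null-bordism, so $\mathrm{cl}(g_0)=\mathrm{cl}(g_1)$ in $\mathrm{Pos}^{\mathrm{Spin}}_{\mathrm{dim}M}(\mathrm{B}\iota_*)$ and $\rho^{\iota_*}_{(2)}(\mathrm{cl}(g_0))=\rho^{\iota_*}_{(2)}(\mathrm{cl}(g_1))$.

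To conclude, suppose $[g_0]$ and $[g_1]$ lie in the same path component of $\mathcal{R}^{\mathrm{sc}>0}(M,N)/\mathrm{Diff}^{\mathrm{Spin}}(M)$. Partition a connecting path in the quotient into finitely many pieces on each of which it lifts to a continuous path in $\mathcal{R}^{\mathrm{sc}>0}(M,N)$, using the standard local slices for the action of $\mathrm{Diff}^{\mathrm{Spin}}(M)$ on the Fr\'echet manifold of Riemannian metrics; at successive transition times the adjacent lifts differ by a spin-diffeomorphism. Applying (b) along each lifted piece and Proposition \ref{prop:constantonorbits} at each transition yields $\rho^{\iota_*}_{(2)}(\mathrm{cl}(g_0))=\rho^{\iota_*}_{(2)}(\mathrm{cl}(g_1))$, which is the required factorization through $\pi_0$.

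The main obstacle is the construction of $G_T$ with positive scalar curvature and the correct bicollared product structure at the corner $N\times\{0,T\}$. Both requirements are by now classical: slowing the path in the $s$-direction handles positivity (the extra curvature terms contributed by $\partial_s g_{\phi(s/T)}$ decay like $1/T^2$), and the collar assumption on each $g_t$ together with the constancy of $\phi$ near $0,T$ yields the bicollared product form. The path-lifting step is also routine but does rely on mild regularity of the $\mathrm{Diff}^{\mathrm{Spin}}(M)$-action; beyond these points the argument is essentially formal.
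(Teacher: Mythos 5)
Your proof is correct. The paper offers no argument for this corollary, treating it as immediate from Proposition \ref{prop:constantonorbits}, and the key additional fact you supply---that $\rho^{\iota_*}_{(2)}\circ\mathrm{cl}$ is constant along paths in $\mathcal{R}^{\mathrm{sc}>0}(M,N)$, established via the slowed-down cylinder $\bigl(M\times[0,T],\, g_{\phi(s/T)}+\mathrm{d}s^2\bigr)$---is exactly what is being taken for granted. Your verification that this cylinder satisfies the conditions of Definition \ref{def:relpos} is accurate: it is collared near $M\times\{0,T\}$ because $\phi$ is locally constant at the endpoints, collared near the internal boundary $N\times[0,T]$ because each $g_t$ is collared near $N$, and has the required bicollared product form $G_{|\partial_cV}+\mathrm{d}t^2+\mathrm{d}s^2$ near the corners $N\times\{0\}$ and $N\times\{T\}$; positivity of scalar curvature for $T$ large follows by the standard $1/T^2$ estimate on the $s$-derivative contributions, using compactness of $M$ and $[0,1]$.

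The one place where your argument is heavier, and slightly shakier, than necessary is the concluding path-lifting step via local slices for the $\mathrm{Diff}^{\mathrm{Spin}}(M)$-action. The Ebin slice theorem is established for closed manifolds; its extension to manifolds with boundary and to the subgroup of spin-structure-preserving diffeomorphisms would require some justification that you do not supply. You can bypass this entirely: for any group action the quotient map $q\colon X\to X/G$ is automatically open, since $q^{-1}(q(U))=\bigcup_{\Psi\in G}\Psi\cdot U$ is a union of open sets. Because $\mathcal{R}^{\mathrm{sc}>0}(M,N)$ is locally path-connected in the $C^\infty$ topology (an open subset of the convex set of metrics collared near $N$), your step (b) is equivalent to local constancy of $\rho^{\iota_*}_{(2)}\circ\mathrm{cl}$ on $\mathcal{R}^{\mathrm{sc}>0}(M,N)$; combined with constancy on orbits from Proposition \ref{prop:constantonorbits}, the induced function on the quotient is constant on each open set $q(U)$, hence locally constant, hence constant on path components and therefore factors through $\pi_0$. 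This replaces the path-lifting and slice-theorem input with a purely formal openness argument and closes the only genuine gap in your write-up.
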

\section{Geometric Application}
We now discuss how our relative $L^2$-$\rho$-invariant can be used to make statements about the richness of the (moduli) space of positive scalar curvature metrics on manifolds with boundary which are collared near the boundary.
\begin{lemma}
Let $f\colon 1\rightarrow \mathbb{Z}_m$ be the inclusion of the trivial group into $\mathbb{Z}_m$ for $m \in \mathbb{N}$. Let $n = 4k +3 > 4$. Denote by $K^f$ the kernel of the map $\mathrm{Pos}^{\mathrm{Spin}}_n(\mathrm{B}f) \rightarrow \Omega^{\mathrm{Spin}}_n(\mathrm{B}f)$. Then $\rho^{f}_{(2)}$ restricted to $K^f$ is nontrivial. In particular, $K^f$ is infinite.
\label{lem:Kfinfinite}
\end{lemma}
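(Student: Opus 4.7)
The plan is to reduce the lemma to the non-triviality of the classical Cheeger--Gromov $L^2$-$\rho$-invariant in the absolute case, which is already known from the work of Botvinnik--Gilkey \cite{BG95} and Piazza--Schick \cite{PS07II}, and then to transport the resulting kernel element through the natural map into the relative framework. Concretely, let $\iota\colon\mathrm{Pos}^{\mathrm{Spin}}_n(\mathrm{B}\mathbb{Z}_m)\to\mathrm{Pos}^{\mathrm{Spin}}_n(\mathrm{B}f)$ be the natural map defined just before Remark~\ref{rem:posomegacommu}, sending a closed cycle $(M,c,g)$ to $(M,\varnothing, c, \varnothing\to\mathrm{B}1,g)$. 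By Remark~\ref{rem:posomegacommu}, $\iota$ carries the absolute kernel $K^{\mathbb{Z}_m}:=\ker\bigl(\mathrm{Pos}^{\mathrm{Spin}}_n(\mathrm{B}\mathbb{Z}_m)\to\Omega^{\mathrm{Spin}}_n(\mathrm{B}\mathbb{Z}_m)\bigr)$ into $K^f$, and Proposition~\ref{prop:Gfcommu} identifies $\rho^{\mathbb{Z}_m}_{(2)}=\rho^f_{(2)}\circ\iota$. Thus it suffices to exhibit $x\in K^{\mathbb{Z}_m}$ with $\rho^{\mathbb{Z}_m}_{(2)}(x)\neq 0$.

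For the absolute statement (assuming $m\geq 2$, without which the lemma is vacuous) I would invoke the standard lens-space construction. Take $L^n=S^n/\mathbb{Z}_m$ with its round positive scalar curvature metric $g_L$ and classifying map $c_L\colon L\to\mathrm{B}\mathbb{Z}_m$; this gives a cycle $[L,c_L,g_L]\in\mathrm{Pos}^{\mathrm{Spin}}_n(\mathrm{B}\mathbb{Z}_m)$. Since $\mathrm{B}\mathbb{Z}_m$ has trivial reduced rational homology, the Atiyah--Hirzebruch spectral sequence gives $\Omega^{\mathrm{Spin}}_*(\mathrm{B}\mathbb{Z}_m)\otimes\mathbb{Q}\cong\Omega^{\mathrm{Spin}}_*\otimes\mathbb{Q}$, and the latter vanishes in degree $4k+3$. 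Hence $[L,c_L]\in\Omega^{\mathrm{Spin}}_n(\mathrm{B}\mathbb{Z}_m)$ is torsion, so some positive multiple $\ell[L,c_L,g_L]$ lies in $K^{\mathbb{Z}_m}$. On the other hand, the classical Atiyah--Patodi--Singer/Donnelly computation for round lens spaces, as exploited in \cite{BG95} and in the $L^2$-setting in \cite{PS07II}, produces $\rho^{\mathbb{Z}_m}_{(2)}([L,c_L,g_L])\in\mathbb{Q}\setminus\{0\}$; by additivity, $\rho^{\mathbb{Z}_m}_{(2)}(\ell[L,c_L,g_L])\neq 0$ as well.

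Combining these two steps, $\iota(\ell[L,c_L,g_L])\in K^f$ has nonzero $\rho^f_{(2)}$-value, proving the main assertion. For the infinitude clause, since $\rho^f_{(2)}\colon K^f\to\mathbb{R}$ is a group homomorphism taking a nonzero real value on this class, all its integer multiples have distinct $\rho$-invariants and are therefore pairwise distinct elements of $K^f$. The whole argument is essentially bookkeeping once the absolute non-triviality is granted, and the main ``obstacle'' is simply that one must trust Proposition~\ref{prop:Gfcommu} and Remark~\ref{rem:posomegacommu} to handle the signs and orientations in the doubling construction; these diagrams were precisely set up to make this reduction transparent.
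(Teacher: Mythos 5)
Your proposal is correct and follows essentially the same route as the paper: reduce to the absolute kernel $K^{\mathbb{Z}_m}\subset\mathrm{Pos}^{\mathrm{Spin}}_n(\mathrm{B}\mathbb{Z}_m)$ using the commutative square of Remark~\ref{rem:posomegacommu}, identify $\rho^{\mathbb{Z}_m}_{(2)}$ with $\rho^f_{(2)}\circ\iota$ via Proposition~\ref{prop:Gfcommu}, and then invoke the known non-triviality of $\rho^{\mathbb{Z}_m}_{(2)}$ on $K^{\mathbb{Z}_m}$. The only difference is that you unpack the absolute step (round lens spaces, an Atiyah--Hirzebruch argument that $\Omega^{\mathrm{Spin}}_{4k+3}(\mathrm{B}\mathbb{Z}_m)$ is torsion, then multiply into the kernel), whereas the paper simply cites \cite{BG95}*{Lemma 2.3}, \cite{PS07II}*{Example 2.13}, and the opening of the proof of \cite{PS07II}*{Theorem 2.25}, together with the observation that $\Omega^{\mathrm{Spin}}_n(\mathrm{B}\mathbb{Z}_m)$ is finite; your sketch of the lens-space non-vanishing is somewhat informal and ultimately also leans on the same references, so it is an expansion rather than a departure. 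Your parenthetical that the lemma is vacuous for $m=1$ is a fair observation the paper leaves implicit.
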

\begin{proof}
Denote by $K$ the kernel of the map $\mathrm{Pos}^{\mathrm{Spin}}_n(\mathrm{B}\mathbb{Z}_m) \rightarrow \Omega^{\mathrm{Spin}}_n(\mathrm{B}\mathbb{Z}_m)$. From \cite{BG95}*{Lemma 2.3} and \cite{PS07II}*{Example 2.13}
it is known that the map $\rho^{\mathbb{Z}_m}_{(2)}$ is nontrivial. The fact that $\Omega^{\mathrm{Spin}}_n(\mathbb{Z}_m)$ is finite, then implies that $\rho^{\mathbb{Z}_m}_{(2)}|_K$ is nontrivial (see the beginning of the proof of \cite{PS07II}*{Theorem 2.25}). The commutativity of the diagram
$$
\begin{tikzcd}
\mathrm{Pos}^{\mathrm{Spin}}_n(\mathrm{B}\mathbb{Z}_m) \arrow{r} \arrow{d} & \Omega^{\mathrm{Spin}}_n(\mathrm{B}\mathbb{Z}_m) \arrow{d} \\
\mathrm{Pos}^{\mathrm{Spin}}_n(\mathrm{B}f) \arrow{r} & \Omega^{\mathrm{Spin}}_n(\mathrm{B}f)
\end{tikzcd}
$$
implies that the image of $K$ under the map $\mathrm{Pos}^{\mathrm{Spin}}_n(\mathrm{B}\mathbb{Z}_m) \rightarrow \mathrm{Pos}^{\mathrm{Spin}}_n(\mathrm{B}f)$ lies in $K^f$. The claim follows from this observation and Proposition \ref{prop:Gfcommu}.
\end{proof}
The following proposition follows immediately from the proof of Lemma \ref{lem:Kfinfinite}.
\begin{proposition}
We use the notation of Lemma \ref{lem:Kfinfinite}. There exists an infinite subset of $K^{f}$ with the following properties:
\begin{itemize}
\item its elements have pairwise different relative $L^2$-$\rho$-invariants and
\item are represented by a cycle $(M,N,c,c|_N,g)$ with $\partial M = N = \varnothing$.
\end{itemize}
\label{prop:closedrepresentatives}
\end{proposition}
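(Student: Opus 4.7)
The plan is to unpack the three ingredients already assembled in the proof of Lemma~\ref{lem:Kfinfinite} and observe that they produce an infinite subset of the desired form with no further work.

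First, I would fix an element $\kappa \in K$ with $\rho^{\mathbb{Z}_m}_{(2)}(\kappa)\neq 0$, which exists by the nontriviality of $\rho^{\mathbb{Z}_m}_{(2)}|_K$ recorded in the proof of Lemma~\ref{lem:Kfinfinite}. Since $\rho^{\mathbb{Z}_m}_{(2)}\colon \mathrm{Pos}^{\mathrm{Spin}}_n(\mathrm{B}\mathbb{Z}_m)\to\mathbb{R}$ is a group homomorphism, the integer multiples $\{n\kappa : n\in\mathbb{Z}\}$ form a subset of $K$ whose $\rho^{\mathbb{Z}_m}_{(2)}$-values are $n\cdot\rho^{\mathbb{Z}_m}_{(2)}(\kappa)$, which are pairwise distinct real numbers.

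Next, I would push these classes into $\mathrm{Pos}^{\mathrm{Spin}}_n(\mathrm{B}f)$ via the natural transformation $\mathrm{Pos}^{\mathrm{Spin}}_n(\mathrm{B}\mathbb{Z}_m)\to\mathrm{Pos}^{\mathrm{Spin}}_n(\mathrm{B}f)$ described above Proposition~\ref{prop:Gfcommu}; the commutative square in Remark~\ref{rem:posomegacommu} (specialised to the proof of Lemma~\ref{lem:Kfinfinite}) shows that these images land in $K^f$. By Proposition~\ref{prop:Gfcommu}, the value of $\rho^f_{(2)}$ on the image of $n\kappa$ equals $\rho^{\mathbb{Z}_m}_{(2)}(n\kappa)=n\cdot\rho^{\mathbb{Z}_m}_{(2)}(\kappa)$, so the images have pairwise distinct relative $L^2$-$\rho$-invariants; in particular they are pairwise distinct in $K^f$ and the subset is infinite.

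Finally, the definition of the map $\mathrm{Pos}^{\mathrm{Spin}}_n(\mathrm{B}\mathbb{Z}_m)\to\mathrm{Pos}^{\mathrm{Spin}}_n(\mathrm{B}f)$ sends a cycle $(M,c,g)$ with $M$ closed to $(M,\varnothing,c,\varnothing\to\mathrm{B}1,g)$, and addition in $\mathrm{Pos}^{\mathrm{Spin}}_n(\mathrm{B}\mathbb{Z}_m)$ is by disjoint union. Hence each class in the infinite subset constructed above is represented by a cycle $(M,N,c,c|_N,g)$ with $\partial M=N=\varnothing$, giving the second bullet point. I do not anticipate any serious obstacle: the proposition is genuinely a direct consequence of the construction used to prove Lemma~\ref{lem:Kfinfinite}, with the only mild subtlety being the observation that a nontrivial homomorphism to $\mathbb{R}$ must take infinitely many distinct values on the cyclic subgroup generated by any non-kernel element.
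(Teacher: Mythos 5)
Your proof is correct and follows essentially the same route as the paper, which simply states that the proposition follows immediately from the proof of Lemma~\ref{lem:Kfinfinite}: you have unpacked the implicit reasoning (a nontrivial homomorphism to $\mathbb{R}$ takes infinitely many values on the integer multiples of a non-kernel element, the push-forward from $\mathrm{Pos}^{\mathrm{Spin}}_n(\mathrm{B}\mathbb{Z}_m)$ preserves this by Proposition~\ref{prop:Gfcommu} and lands in $K^f$ by Remark~\ref{rem:posomegacommu}, and the map by construction produces cycles with empty boundary). No gaps.
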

\begin{definition}
Let $\varphi\colon \Lambda \rightarrow \Gamma$ be a group homomorphism. An element $\gamma \in \Gamma$ is called a relative torsion element of order $m$ of the homomorphism $\varphi$ if it is a torsion element of order $m$ and if the subgroup generated by $\gamma$ intersects $\mathrm{Im}(\Lambda)^\Gamma$ only at the neutral element. If such an element exists, we say that $\varphi$ has relative torsion.
\end{definition}
\begin{proposition}
Let $\varphi\colon \Lambda \rightarrow \Gamma$ be a group homomorphism. Suppose that $\varphi$ has relative torsion. Let $n = 4k +3 > 4$. Denote by $K^{\varphi}$ the kernel of the map $\mathrm{Pos}^{\mathrm{Spin}}_n(\mathrm{B}\varphi) \rightarrow \Omega^{\mathrm{Spin}}_n(\mathrm{B}\varphi)$. Then there exists an infinite subset of $K^{\varphi}$ with the following properties:
\begin{itemize}
\item its elements have pairwise different relative $L^2$-$\rho$-invariants and
\item are represented by a cycle $(M,N,c,c|_N,g)$ with $\partial M = N = \varnothing$.
\end{itemize}
\label{prop:closedrepresentativesgammalambda}
\end{proposition}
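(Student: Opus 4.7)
The plan is to reduce this to Proposition \ref{prop:closedrepresentatives} by means of Proposition \ref{prop:injectiverhotwocommu}, using the relative torsion element to produce an auxiliary commutative square that makes $p$ injective.

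Let $\gamma\in\Gamma$ be a relative torsion element of order $m$ of $\varphi$. I would set $H=1$, $G=\mathbb{Z}_m$, let $f\colon 1\to\mathbb{Z}_m$ be the inclusion of the trivial group, let $s\colon 1\to \Lambda$ be the unique map, and let $t\colon \mathbb{Z}_m\to\Gamma$ be the homomorphism sending the generator to $\gamma$ (well defined since $\gamma^m=1$). The square
\[
\begin{tikzcd}
1 \arrow{r}{s}\arrow{d}{f} & \Lambda \arrow{d}{\varphi} \\
\mathbb{Z}_m \arrow{r}{t} & \Gamma
\end{tikzcd}
\]
commutes trivially. The induced map $p\colon \mathbb{Z}_m/\mathrm{Im}(1)^{\mathbb{Z}_m}=\mathbb{Z}_m\to \Gamma/\mathrm{Im}(\Lambda)^\Gamma$ is exactly the map sending the generator to the class of $\gamma$, and the key point is that $p$ is \emph{injective} precisely because the subgroup $\langle\gamma\rangle$ meets $\mathrm{Im}(\Lambda)^\Gamma$ only at the neutral element, which is the very definition of relative torsion.

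Next I would apply Proposition \ref{prop:closedrepresentatives} to obtain an infinite subset $S\subset K^{f}$ with pairwise different $\rho^{f}_{(2)}$-values whose elements are represented by cycles $(M,\varnothing,c,\varnothing\to \mathrm{B}1,g)$ with $M$ closed. I would then push $S$ forward under $\mathrm{B}(t,s)_*\colon \mathrm{Pos}^{\mathrm{Spin}}_n(\mathrm{B}f)\to \mathrm{Pos}^{\mathrm{Spin}}_n(\mathrm{B}\varphi)$. Since $\mathrm{B}(t,s)_*$ acts by postcomposing classifying maps and does not alter the underlying manifold, the images are still represented by cycles of the form $(M,\varnothing,\widetilde c,\varnothing\to\mathrm{B}\Lambda,g)$ with $\partial M=\varnothing$. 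By Proposition \ref{prop:injectiverhotwocommu}, applicable because $p$ is injective, one has $\rho^{\varphi}_{(2)}(\mathrm{B}(t,s)_*x)=\rho^{f}_{(2)}(x)$ for every $x\in S$, so the images retain pairwise different relative $L^2$-$\rho$-invariants and hence form an infinite subset of $\mathrm{Pos}^{\mathrm{Spin}}_n(\mathrm{B}\varphi)$.

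It remains to check that the image of $S$ lies in $K^{\varphi}$. By the naturality statement in Remark \ref{rem:posomegacommu}, the diagram
\[
\begin{tikzcd}
\mathrm{Pos}^{\mathrm{Spin}}_n(\mathrm{B}f) \arrow{r} \arrow{d}{\mathrm{B}(t,s)_*} & \Omega^{\mathrm{Spin}}_n(\mathrm{B}f) \arrow{d}{\mathrm{B}(t,s)_*} \\
\mathrm{Pos}^{\mathrm{Spin}}_n(\mathrm{B}\varphi) \arrow{r} & \Omega^{\mathrm{Spin}}_n(\mathrm{B}\varphi)
\end{tikzcd}
\]
commutes, so an element killed by the top horizontal forgetful map is sent by $\mathrm{B}(t,s)_*$ into the kernel of the bottom horizontal forgetful map, i.e.\ into $K^{\varphi}$. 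The only real step with content is the injectivity of $p$, which is automatic once one unpacks the definition of relative torsion, so I do not anticipate a genuine obstacle — the proof is essentially a naturality argument chaining Propositions \ref{prop:closedrepresentatives} and \ref{prop:injectiverhotwocommu}.
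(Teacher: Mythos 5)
Your proof is correct and takes essentially the same approach as the paper: you use the relative torsion element $\gamma$ to build the injective map $t\colon \mathbb{Z}_m\to\Gamma$, verify that $p$ is injective (which is exactly where relative torsion enters), push forward the infinite subset of $K^f$ from Proposition \ref{prop:closedrepresentatives} via $\mathrm{B}(t,s)_*$, and invoke Proposition \ref{prop:injectiverhotwocommu} to preserve the $\rho$-invariants. The only cosmetic difference is the citation for the final commutative square: the paper appeals to the naturality of the relative Stolz sequence (Proposition \ref{relative-Stolz}), while you point to Remark \ref{rem:posomegacommu}, which states a closely related but slightly different square (absolute to relative, rather than relative to relative); the conclusion is nonetheless correct.
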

\begin{proof}
Let $\gamma \in \Gamma$ be a relative torsion element of order $m$ of $\varphi$. Sending the generator of $\mathbb{Z}_m$ to $\gamma$ gives an injective homomorphism $t\colon \mathbb{Z}_m \rightarrow \Gamma$. Denote by $s\colon 1 \rightarrow \Lambda$ the trivial homomorphism of the trivial group into $\Lambda$. The pair $(t,s)$ gives rise to map $\mathrm{B}(t,s)_*\colon  \mathrm{Pos}^{\mathrm{Spin}}_{n}(\mathrm{B}f) \rightarrow \mathrm{Pos}^{\mathrm{Spin}}_{n}(\mathrm{B}\varphi)$, where $f\colon 1 \rightarrow \mathbb{Z}_m$ is the trivial homomorphism. Postcomposition of the map $t$ with the canonical projection $\Gamma \rightarrow \Gamma^{}/\mathrm{Im}(\Lambda)^\Gamma$ gives rise to an injective map. Thus, by Proposition~\ref{prop:injectiverhotwocommu}, the diagram
$$
\begin{tikzcd}
\mathrm{Pos}^{\mathrm{Spin}}_n(\mathrm{B}f) \arrow{rr} {\mathrm{B}(t,s)_*}\arrow{rd}[swap]{\rho^{f}_{(2)}} & & \mathrm{Pos}^{\mathrm{Spin}}_n(\mathrm{B}\varphi) \arrow{ld}{\rho^{\varphi}_{(2)}}\\
& \reals & \\
\end{tikzcd}
$$
is commutative and the subset of $K^\varphi$ of the claim can be obtained by pushing the subset of $K^f$ provided by Proposition \ref{prop:closedrepresentatives} to $\mathrm{Pos}^{\mathrm{Spin}}_n(\mathrm{B}\varphi)$ using $\mathrm{B}(t,s)_*$. The fact that these elements lie in $K^{\varphi}$ follows from the naturality of the relative Stolz positive scalar curvature sequence.
\end{proof}
As in the previous section, let $M$ be a compact spin odd-dimensional manifold with boundary $N$ and let $\mathrm{Diff}^{\mathrm{Spin}}(M)$ denote the group of spin structure preserving diffeomorphisms of $M$. 
Furthermore, denote by $\iota_*$ the map $\pi_1(N) \rightarrow \pi_1(M)$ induced by the inclusion and by $c$ the map of pairs $(M,N) \rightarrow (\mathrm{B}\pi_1(M),\mathrm{B}\pi_1(N))$ classifying the universal covers of $M$ and $N$.
\begin{theorem}
Suppose that $n\coloneqq\mathrm{dim} M > 4$ is of the form $4k+3$ and that $\iota_*$ has relative torsion. Furthermore, suppose that $\mathcal{R}^{\mathrm{sc}>0}(M,N) \neq \varnothing$. Then there are infinitely many orbits of the action of $\mathrm{Diff}^{\mathrm{Spin}}(M)$ on $\mathcal{R}^{\mathrm{sc}>0}(M,N)$ such that metrics belonging to different orbits are not $(\pi_1(M),\pi_1(N))$-bordant.
\label{thm:geometricresult}
\end{theorem}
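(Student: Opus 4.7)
The plan is to produce infinitely many psc metrics in $\mathcal{R}^{\mathrm{sc}>0}(M,N)$ taking pairwise distinct values of $\rho^{\iota_*}_{(2)}$, following the blueprint of \cite{PS07II}*{Theorem 2.25} adapted to the relative setting. Fix a reference $g_0 \in \mathcal{R}^{\mathrm{sc}>0}(M,N)$, which exists by hypothesis, and set $[g_0] \coloneqq \mathrm{cl}(g_0) \in \mathrm{Pos}^{\mathrm{Spin}}_n(\mathrm{B}\iota_*)$. Since $\iota_*$ has relative torsion, Proposition~\ref{prop:closedrepresentativesgammalambda} supplies an infinite sequence of classes $\alpha_i \in K^{\iota_*}$ with pairwise distinct values $\rho^{\iota_*}_{(2)}(\alpha_i)$, each represented by a closed spin cycle $(X_i,\varnothing,c_i,\varnothing,h_i)$.

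Form the classes $\beta_i \coloneqq [g_0] + \alpha_i$. Additivity of $\rho^{\iota_*}_{(2)}$ ensures that the $\rho^{\iota_*}_{(2)}(\beta_i)$ are pairwise distinct, while the inclusion $\alpha_i \in K^{\iota_*}$ guarantees that each $\beta_i$ shares with $[g_0]$ the same image $[M,N,c,c|_N]$ under the forgetful map to $\Omega^{\mathrm{Spin}}_n(\mathrm{B}\iota_*)$. By the relative Stolz sequence (Proposition~\ref{relative-Stolz}), the differences $\alpha_i = \beta_i - [g_0]$ all lie in the image of $\mathrm{R}^{\mathrm{Spin}}_{n+1}(\mathrm{B}\iota_*) \to \mathrm{Pos}^{\mathrm{Spin}}_n(\mathrm{B}\iota_*)$.

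The central task is the realization step: for each $i$, produce some $g_i \in \mathcal{R}^{\mathrm{sc}>0}(M,N)$ with $\mathrm{cl}(g_i) = \beta_i$. A priori, $\beta_i$ is only represented by $(M \sqcup X_i, N, c \sqcup c_i, c|_N, g_0 \sqcup h_i)$, so I would first perform a Gromov--Lawson interior connected sum to replace this cycle by $(M \# X_i, N, c \# c_i, c|_N, g_0 \# h_i)$, with the psc metric still collared near $N$. Using that $(X_i,c_i)$ becomes null-bordant after adjoining a relative piece mapping into $\mathrm{B}\Lambda$, I would then run a sequence of interior codimension-$\geq 3$ surgeries (available since $n \geq 5$), tracing the handle decomposition of this null-cobordism, to transform $M \# X_i$ back into $M$. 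Each such surgery lifts to a psc-surgery by the Gromov--Lawson theorem in its form for manifolds with collared boundary (after Chernysh and Walsh), yielding a psc metric $g_i \in \mathcal{R}^{\mathrm{sc}>0}(M,N)$ psc-bordant to $(M \# X_i, g_0 \# h_i)$, hence with $\mathrm{cl}(g_i) = \beta_i$.

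Once realization is achieved, the conclusion is immediate: the values $\rho^{\iota_*}_{(2)}(\mathrm{cl}(g_i)) = \rho^{\iota_*}_{(2)}(\beta_i)$ are pairwise distinct, so the $g_i$ sit in pairwise distinct $(\pi_1(M),\pi_1(N))$-bordism classes, and by Proposition~\ref{prop:constantonorbits} they also lie in pairwise distinct $\mathrm{Diff}^{\mathrm{Spin}}(M)$-orbits. The main obstacle is clearly the realization step: it amounts to an analogue of Stolz's surgery realization theorem in the category of spin manifolds with boundary endowed with collared psc metrics. The dimension hypothesis $n > 4$ enters precisely to ensure that all modifications can be performed by surgeries of codimension at least three carried out in the interior of $M$, preserving the collared structure near $N$ so that the Gromov--Lawson surgery technique applies at each step.
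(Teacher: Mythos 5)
Your proposal follows essentially the same strategy as the paper: shift an infinite family in $K^{\iota_*}$ supplied by Proposition~\ref{prop:closedrepresentativesgammalambda} by the reference class $[g_0]$, realize each shifted class by a collared psc metric on $M$ via Gromov--Lawson/Chernysh--Walsh surgery along a null-bordism, and conclude with Proposition~\ref{prop:constantonorbits}. One small imprecision in the realization step: interior codimension-$\geq 3$ surgeries are not automatic from $n>4$ alone; the paper first performs surgeries on the bordism $V$ away from $\partial V$ so that $E\colon V\to\mathrm{B}\pi_1(M)$ is a $\pi_1$-isomorphism and $M\times\{1\}\hookrightarrow V$ is $2$-connected (the relative form of Stolz's Lemma 5.2, cf.\ \cite{Schick-Zenobi}), and only then applies Wall's handle decomposition and the improved surgery theorem -- your sketch assumes this implicitly rather than arranging it.
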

\begin{proof}
The statement of the theorem follows from Proposition \ref{prop:constantonorbits} and the following claim.
\begin{claima}
There are infinitely many classes in $\mathcal{R}_{\mathrm{bord}}^{\mathrm{sc}>0}(M,N)$ with different relative $L^2$-$\rho$-invariants.
\end{claima}

In the following, we will prove Claim A. Denote by $K^{\iota_*}$ the kernel of 
$$\mathrm{Pos}^{\mathrm{Spin}}_n(\mathrm{B}\iota_*) \rightarrow \mathrm{\Omega}^{\mathrm{Spin}}_n(\mathrm{B}\iota_*)$$ and let $S$ be an infinite subset of $K^{\iota_*}$ provided by Proposition \ref{prop:closedrepresentativesgammalambda}. Furthermore, let $g_0 \in \mathcal{R}^{\mathrm{sc}>0}(M,N)$ be some psc metric on $M$. For any $s = [(M^\prime,\varnothing,c^\prime,\varnothing\rightarrow \mathrm{B}{\pi_1(N)},g^\prime)] \in S$ we have
$$\rho^{\iota_*}_{(2)}(s + [(M,N,c,g_0)]) = \rho^{\iota_*}_{(2)}(s) + \rho^{\iota_*}_{(2)}( [(M,N,c,g_0)]).$$
Thus $s + [(M,N,c,g_0)] \neq s^\prime +[(M,N,c,g_0)]$ if $s \neq s^\prime$. Claim A then follows from
\begin{claimb}
For each $s = [(M^\prime,\varnothing,c^\prime,\varnothing\rightarrow \mathrm{B}{\pi_1(N)},g^\prime)] \in S$ there exists $g_s \in \mathcal{R}^{\mathrm{sc}>0}(M,N)$ such that $s + [(M,N,c,g_0)] = [(M,N,c,g_s)] \in \mathrm{Pos}^{\mathrm{Spin}}_n(\mathrm{B}\iota_*)$.
\end{claimb}

In the following, we will prove Claim B. Claim B is a relative version of \cite{BG95}*{Lemma 3.1} (see also \cite{PS07II}*{Proposition 2.4}). Botvinnik and Gilkey cite the results of \cite{Mi84} in the proof of \cite{BG95}*{Lemma 3.1}. Since $s$ is represented by a closed manifold, we can use essentially the same arguments. We sketch the arguments for the convenience of the reader and refer the reader to \cite{Mi84} for further details.

Note that since $s \in K^{\iota_*}$ there exists $(V^\prime,E^\prime)$ with $\partial V^\prime = M^\prime$ and $E^\prime\colon V^\prime \rightarrow \mathrm{B}\Gamma$ a continuous map extending $c^\prime$. Thus $[(M^\prime,\varnothing,c^\prime,\varnothing\rightarrow \mathrm{B}{\pi_1(N)})] + [(M,N,c)] = [(M,N,c)] \in \mathrm{\Omega}^{\mathrm{Spin}}_n(\mathrm{B}\iota_*)$. An explicit equivalence (bordism) between the cycles $(M^\prime,\varnothing,c^\prime,\varnothing\rightarrow \mathrm{B}{\pi_1(N)}) + (M,N,c)$ and $(M,N,c)$ is given by $(V \coloneqq V^\prime \sqcup M \times [0,1],E)$, where $E\colon  (V,N \times [0,1]) \rightarrow (\mathrm{B}\pi_1(M),\mathrm{B}\pi_1(N))$ is the map of pairs which restricts to $E^\prime$ on $V^\prime$ and to $c \circ \pi_M$ on $M \times [0,1]$. Here, $\pi_M\colon M \times [0,1] \rightarrow M$ denotes the projection on $M$. Consider the connected sum of $V^\prime$ and $M\times [0,1]$ obtained by removing a disk from $V^\prime$ away from $M^\prime$ and a disk on $M \times [0,1]$ away from $\partial (M \times [0,1])$ and then attaching the rest along the boundary of the removed disks. We denote the resulting space again by $V$. Note that, since any two maps on a disk are homotopic we can define a map, also denoted by $E$, out of the new $V$ to $\mathrm{B}\pi_1(M)$ which coincides with the previous map on the complement of the removed disks. To summarise, we have a connected spin bordism $V$ between $M\times\{0\} \sqcup M^\prime$ and $M\times\{1\}$ with internal boundary $N \times [0,1]$ and a map $E\colon (V,N\times[0,1]) \rightarrow (\mathrm{B}\pi_1(M),\mathrm{B}\pi_1(N))$.
Thanks to \cite[Lemma 5.2]{Stolz} (see also \cite[Proposition 3.2]{Schick-Zenobi}) and the fact that the surgeries involved in its proof can be done away from $\partial V$, we can assume that 
	\begin{itemize}
		\item $E\colon V\rightarrow \mathrm{B}\pi_1(M)$ induces an isomorphism at the level of fundamental groups;
		\item the inclusion $M\times\{1\} \hookrightarrow V$ is $2$-connected (i.e. it induces an epimorphism between $\pi_2$-groups).
	\end{itemize}
 From \cite{Wa71}\footnote{bordisms of manifolds with boundary such as $V$ are dealt with in \cite{Wa71}. There, they are called cobordisms relative to the boundary.} (see also \cite{Mi84}*{Theorem 1.1}), it follows that there exists a self-indexing Morse function on the bordism $V$ (see \cite{Mi84} for the definition) which takes the value $-1/2$ on $M\times \{1\}$ and the value $n+1+1/2$ on $M^\prime \sqcup M \times\{0\}$, has no critical points of index less than $3$ and has no critical points near $\partial V$. Furthermore, we can assume that in a collar neighbourhood of $N\times [0,1]$ it is a linear interpolation between $-1/2$ and $n+1+1/2$. Using $-f$, we can decompose $V$ into traces of surgeries on embedded spheres of codimension bigger than $2$. Note that, since $-f$ has no critical points near $N \times [0,1]$ the result of each surgery has boundary $N$. The improved surgery theorem (see \cite{Ga87} and \cite{Wal11}*{Theorem 0.2}) then implies that $V$ can be endowed with a psc metric $G$, which is collared near $M\times\{0\} \sqcup M^\prime$ and $M \times \{1\}$ and extends the metric $g^\prime \sqcup g_0$. Moreover, since the surgeries on the level sets are taking place  away from $N$, following the construction in \cite{Wal11} gives rise to $G$ which is collared near $N \times [0,1]$ and even restricts to $g_0|_N + \mathrm{d}t^2$ on $N\times [0,1]$. Setting $g_s \coloneqq G|_{M\times \{1\}}$ then finishes the proof of Claim A and the theorem.
\end{proof}
Theorem \ref{thm:geometricresult} has the following immediate
\begin{corollary}
Suppose that $M$ satisfies the conditions of Theorem \ref{thm:geometricresult} then
\begin{itemize}
\item $\#\mathcal{R}_{\mathrm{bord}}^{\mathrm{sc}>0}(M,N) = \infty$
\item $\#\pi_0(\mathcal{R}^{\mathrm{sc}>0}(M,N)) = \infty$
\item $\#\pi_0(\mathcal{R}^{\mathrm{sc}>0}(M,N)^{}/\mathrm{Diff}^{\mathrm{Spin}}(M)) = \infty$
\end{itemize}
\end{corollary}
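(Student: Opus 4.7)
The plan is to deduce all three cardinality statements from Theorem~\ref{thm:geometricresult} by observing that the relative $L^2$-$\rho$-invariant $\rho^{\iota_*}_{(2)}$ descends consistently through each of the three quotients in question. The first bullet is immediate: Theorem~\ref{thm:geometricresult} already provides infinitely many $\mathrm{Diff}^{\mathrm{Spin}}(M)$-orbits in $\mathcal{R}^{\mathrm{sc}>0}(M,N)$ whose representative metrics lie in pairwise distinct $(\pi_1(M),\pi_1(N))$-bordism classes, so selecting one representative per orbit exhibits infinitely many elements of $\mathcal{R}_{\mathrm{bord}}^{\mathrm{sc}>0}(M,N)$.

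For the second bullet, I would first show that path-equivalent psc metrics are $(\pi_1(M),\pi_1(N))$-bordant, so that the tautological map $\pi_0(\mathcal{R}^{\mathrm{sc}>0}(M,N)) \to \mathcal{R}_{\mathrm{bord}}^{\mathrm{sc}>0}(M,N)$ is well-defined. Given a continuous path $\{g_t\}_{t\in[0,1]}$ in $\mathcal{R}^{\mathrm{sc}>0}(M,N)$, I would build a concordance on $M\times[0,T]$ by taking the warped product $g_{t/T}+\mathrm{d}t^2$ for $T$ sufficiently large, then collar it near $M\times\{0,1\}$ and bicollar it near the corner $N\times\{0,1\}$ in order to produce a legitimate cobordism cycle in the sense of Definition~\ref{def:relpos}. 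Because each $g_t$ is collared near $N$ and the scalar-curvature lower bound is uniform on the compact family $\{g_t\}$, the warped product has positive scalar curvature for $T\gg 0$, and smoothing near the boundary preserves this. Combined with the first bullet, this forces $\pi_0(\mathcal{R}^{\mathrm{sc}>0}(M,N))$ to be infinite.

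For the third bullet, I would invoke directly the Corollary of Proposition~\ref{prop:constantonorbits}, which gives an induced map $\pi_0(\mathcal{R}^{\mathrm{sc}>0}(M,N)/\mathrm{Diff}^{\mathrm{Spin}}(M)) \to \mathbb{R}$ factoring $\rho^{\iota_*}_{(2)}$. The proof of Theorem~\ref{thm:geometricresult} (Claim A) constructs the infinitely many $\mathrm{Diff}^{\mathrm{Spin}}(M)$-orbits in such a way that their $\rho$-invariants are pairwise distinct real numbers; consequently the induced map on the moduli space takes infinitely many values, so $\pi_0(\mathcal{R}^{\mathrm{sc}>0}(M,N)/\mathrm{Diff}^{\mathrm{Spin}}(M))$ is infinite as well.

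The main—and essentially only substantive—obstacle is the concordance construction in the second bullet: realising the warped-product bordism $M\times[0,T]$ with the precise collaring near $M\times\{0,1\}$ and bicollaring near the corner $N\times\{0,1\}$ required by Definition~\ref{def:relpos}, while keeping positive scalar curvature throughout. This is a standard scaling-and-smoothing manipulation in the collar-aware setting and introduces no new difficulties beyond those already encountered in the proof of Theorem~\ref{thm:geometricresult}, where similar collaring and bicollaring conditions were enforced on bordisms produced by surgery. Once this is in hand, the three statements follow as formal consequences of Theorem~\ref{thm:geometricresult} and Proposition~\ref{prop:constantonorbits}.
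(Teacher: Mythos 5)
Your proposal is correct and follows the route the paper intends by labelling the statement an ``immediate corollary'': bullet one is read off directly from Theorem~\ref{thm:geometricresult}; bullet two uses the standard warped-product concordance $g_{t/T}+\mathrm{d}t^2$ on $M\times[0,T]$ (stretched and reparametrised so the path is constant near the endpoints, which makes the bicollar condition at the corner $N\times\{0,T\}$ hold) to show that path-equivalence implies $(\pi_1(M),\pi_1(N))$-bordism, giving a well-defined surjection $\pi_0(\mathcal{R}^{\mathrm{sc}>0}(M,N))\to\mathcal{R}^{\mathrm{sc}>0}_{\mathrm{bord}}(M,N)$; and bullet three invokes the corollary to Proposition~\ref{prop:constantonorbits} to descend $\rho^{\iota_*}_{(2)}$ to $\pi_0$ of the moduli space and observes it takes infinitely many values on the orbits produced in Claim A.
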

\begin{remark}
Suppose that $M$ satisfies the conditions of Theorem \ref{thm:geometricresult} and $g \in \mathcal{R}^{\mathrm{sc}>0}(M,N)$. Set $h \coloneqq g|_N$ and denote by $\mathcal{R}^{\mathrm{sc}>0}(M,N)_h$ the subspace of $\mathcal{R}^{\mathrm{sc}>0}(M,N)$ consisting of metrics which restrict to $h$ on the boundary. Then it follows from the proof of Theorem \ref{thm:geometricresult} that $$\mathcal{R}^{\mathrm{sc}>0}(M,N)_h$$ also has infinitely many connected components. The subgroup of $\mathrm{Diff}^{\mathrm{Spin}}(M)$ consisting of diffeomorphisms which restrict to the identity map of $N$ acts on $\mathcal{R}^{\mathrm{sc}>0}(M,N)_h$. Same observations imply that the quotient of $\mathcal{R}^{\mathrm{sc}>0}(M,N)_h$ by this subgroup also has infinitely many connected components.
\end{remark}
\begin{remark}
In \cite{HR10} and \cite{BR15} the higher $\rho$-invariants associated to psc metrics on closed manifolds have been related to the classical  APS and $L^2$-$\rho$-invariants and this relationship has been used to prove certain rigidity results for these numerical invariants. Recently, $K$-theoretic higher $\rho$-invariants associated to psc metrics on manifolds with boundary have been defined in \cite{PZ19} and \cite{S20} using different techniques. We plan to address the relationship between the latter $K$-theoretic invariants and the invariants defined in this paper in the future.
\end{remark}
\bibliography{references}
\end{document}